\documentclass[12pt]{amsart}

\usepackage{amsmath,amssymb,amsfonts,amsthm}
\usepackage{mathtools}
\usepackage[margin=1in]{geometry}

\usepackage{graphicx}
\usepackage{tikz}
\usetikzlibrary{arrows.meta,graphs,shapes.geometric,decorations.pathreplacing}
\usepackage{pgfplots}
\pgfplotsset{compat=1.18}

\usepackage{booktabs}
\usepackage{multicol}
\usepackage{enumitem}

\usepackage[numbers,sort&compress]{natbib}

\usepackage{xcolor}
\definecolor{myteal}{RGB}{0,128,128}
\usepackage[pdfborder={0 0 0}]{hyperref}
\hypersetup{
  colorlinks=true,
  citecolor=myteal,
  linkcolor=blue,
  urlcolor=myteal
}

\usepackage{aliascnt}
\usepackage[capitalise, noabbrev]{cleveref}

\usepackage{parskip}

\usepackage{listings}
\usepackage{booktabs}

\newcommand{\newaliastheorem}[3]{%
  \newaliascnt{#1}{thm}%
  \newtheorem{#1}[#1]{#2}%
  \aliascntresetthe{#1}%
  \crefname{#1}{#2}{#3}%
  \Crefname{#1}{#2}{#3}%
}

\theoremstyle{plain}
\newtheorem{thm}{Theorem}[section]
\crefname{thm}{Theorem}{Theorems}
\Crefname{thm}{Theorem}{Theorems}

\newaliastheorem{lem}{Lemma}{Lemmas}
\newaliastheorem{prop}{Proposition}{Propositions}
\newaliastheorem{cor}{Corollary}{Corollaries}
\newaliastheorem{clm}{Claim}{Claims}
\newaliastheorem{conj}{Conjecture}{Conjectures}

\theoremstyle{definition}
\newaliastheorem{defn}{Definition}{Definitions}
\newaliastheorem{ex}{Example}{Examples}
\newaliastheorem{notation}{Notation}{Notations}
\newaliastheorem{setup}{Set-up}{Set-ups}
\newaliastheorem{remark}{Remark}{Remarks}
\newaliastheorem{problem}{Problem}{Problems}
\newaliastheorem{question}{Question}{Questions}
\newaliastheorem{observation}{Observation}{Observation}

\setlist[enumerate]{leftmargin=*,itemsep=2pt,topsep=4pt}
\SetEnumerateShortLabel{a}{\textup{(\alph*)}}
\SetEnumerateShortLabel{A}{\textup{(\Alph*)}}
\SetEnumerateShortLabel{1}{\textup{(\arabic*)}}
\SetEnumerateShortLabel{i}{\textup{(\roman*)}}
\SetEnumerateShortLabel{I}{\textup{(\Roman*)}}

\makeatletter
\newtheorem*{rep@theorem}{\rep@title}
\newcommand{\newreptheorem}[2]{%
\newenvironment{rep#1}[1]{%
 \def\rep@title{#2 \ref{##1}}%
 \begin{rep@theorem}}%
 {\end{rep@theorem}}}
\makeatother

\newreptheorem{theorem}{Theorem}

\DeclareMathOperator{\ord}{ord}

\begin{document}

\title{Pseudo-Gorenstein$^{*}$ Graphs}

\author[Hibi]{Takayuki Hibi}
\address[T.~Hibi]{Department of Pure and Applied Mathematics, Graduate School of Information Science and Technology, Osaka University, Suita, Osaka 565--0871, Japan}
\email{\href{mailto:hibi@math.sci.osaka-u.ac.jp}{hibi@math.sci.osaka-u.ac.jp}}

\author[Kara]{Selvi Kara}
\address[S.~Kara]{Department of Mathematics, Bryn Mawr College, Bryn Mawr, PA 19010}
\email{\href{mailto:skara@brynmawr.edu}{skara@brynmawr.edu}}

\author[Vien]{Dalena Vien}
\address[D.~Vien]{Department of Mathematics, Bryn Mawr College, Bryn Mawr, PA 19010}
\email{\href{mailto:dvien@brynmawr.edu}{dvien@brynmawr.edu}}

	\keywords{pseudo-Gorenstein$^{*}$ graph}
	
    \subjclass[2020]{05C38}

    	   	
	\thanks{The present paper was completed while the first author stayed at Bryn Mawr College, Pennsylvania, February 28 -- March 21, 2026. The second author Kara was supported by NSF grant DMS--2418805. } 

\begin{abstract}
Motivated by pseudo-Gorenstein rings in commutative algebra, introduced by Herzog et al., we define pseudo-Gorenstein$^{*}$ graphs and classify them in several natural graph families using independence polynomials.


\end{abstract}

\maketitle

\section*{Introduction}

Let \(A=\bigoplus_{n\ge 0}A_n\) be a standard graded \(K\)-algebra. Its Hilbert series is
\[
H_A(t)=\sum_{n\ge 0}(\dim_K A_n)t^n,
\]
and it can be written uniquely in the form
\[
H_A(t)=\frac{h_0+h_1t+\cdots+h_st^s}{(1-t)^d},
\]
where \(d=\dim A\) and \(h_s\neq 0\). The numerator
\[
h_A(t)=h_0+h_1t+\cdots+h_st^s
\]
is called the \emph{\(h\)-polynomial} of \(A\). The \(\mathfrak a\)-invariant of \(A\) is introduced by Goto and Watanabe  in \cite{GW} and it is defined as
\[
\mathfrak a(A)=\deg h_A(t)-d=s-d.
\]
Hilbert functions and \(h\)-vectors play a central role in commutative algebra and combinatorics; see \cite{Sta78}. If \(A\) is Cohen--Macaulay, then the coefficients of \(h_A(t)\) are positive, while if \(A\) is Gorenstein, then the \(h\)-vector is symmetric, and in particular \(h_s=1\). Motivated by the pseudo-Gorenstein condition studied in \cite{EHHM}, and by earlier combinatorial work of Stanley on the top coefficient of the \(h\)-polynomial \cite{Sta72,Hibi,EC1}, we investigate a graph-theoretic analogue of this extremal behavior.

Throughout the paper, graphs are finite and simple. Let \(G = (V(G),E(G))\) be a graph on  \(V(G) =[n]\), let \(S=K[x_1,\dots,x_n]\), and let \(I(G)\subset S\) be the edge ideal of \(G\), generated by the monomials \(x_ix_j\) with \(\{i,j\}\in E(G)\). Write
\[
h_{S/I(G)}(t)=h_0+h_1t+\cdots+h_st^s.
\]
We say that \(G\) is \emph{pseudo-Gorenstein} if \(h_s=1\), and \emph{pseudo-Gorenstein\(^{*}\)} if, in addition, \(\mathfrak a(S/I(G))=0\). Thus the pseudo-Gorenstein\(^{*}\) condition requires that the \(h\)-polynomial have top coefficient \(1\) and maximal possible degree. In contrast with the commutative-algebraic setting, we do not assume that \(S/I(G)\) is Cohen--Macaulay.

The independence polynomial provides the natural framework for our problem. While this perspective has only recently been introduced into the study of $h$-polynomials of edge and cover ideals \cite{biermann2026}, the present paper shows that it is a powerful and effective tool for understanding pseudo-Gorenstein$^{*}$ graphs.  Let \(\alpha(G)\) denote the independence number of \(G\), and let
\[
P_G(x)=\sum_{i=0}^{\alpha(G)} g_i x^i
\]
be the independence polynomial of \(G\), where \(g_i\) is the number of independent sets of \(G\) of size \(i\). Since \(S/I(G)\) is the Stanley--Reisner ring of the independence complex of \(G\), one has
\[
h_{S/I(G)}(t)=(1-t)^{\alpha(G)}P_G\!\left(\frac{t}{1-t}\right).
\]
In particular, the value of $P_G(x)$ at \(x=-1\) controls the top coefficient of the \(h\)-polynomial. Moreover, it was shown in \cite{biermann2026} that if $M(G)$  denotes the multiplicity of $x=-1$ as a root of the independence polynomial $P_G(x)$,
then
\[
\deg h_{S/I(G)}(t)=\alpha(G)-M(G).
\]
Consequently,
\[
\mathfrak a(G)=0 \iff P_G(-1)\neq 0.
\]
Hence
\[
G \text{ is pseudo-Gorenstein}^{*}
\iff
P_G(-1)=(-1)^{\alpha(G)}.
\]
Thus the problem of identifying pseudo-Gorenstein\(^{*}\) graphs becomes a problem about the value  of the independence polynomial at $x=-1$.

The independence polynomial, introduced by Gutman and Harary as a generalization of the matching polynomial, has been studied extensively from several points of view, including real-rootedness, unimodality, coefficient behavior, and special evaluations \cite{GutmanHarary,Claw_free_ind_poly,Unimodal_ind_poly}. The independence polynomial has found applications even outside of graph theory, for instance in music \cite{Application_music_ind_poly}. It is also closely related to other classical graph polynomials: it appears in matching theory through line graphs and also as a specialization of the generalized chromatic polynomial \cite{Gen_choromatic_poly}. Among its special values, \(P_G(1)\) counts all independent sets of \(G\) and is often called the Fibonacci number of \(G\) (see \cite{Fibonaci_main,Fibonacci,Unicyclic}), while \(P_G(-1)\) is the alternating sum of sizes of independent sets (see \cite{levit2009independence_preprint,Levit_Mandrescu,Ind_complex}). For our purposes, this latter value is extremely important.

In this paper, we classify pseudo-Gorenstein\(^{*}\) graphs in several natural families. We prove that a cycle \(C_n\) is pseudo-Gorenstein\(^{*}\) if and only if \(n\equiv 1,2,5,10 \pmod{12}\), 
and that a path \(P_n\) is pseudo-Gorenstein\(^{*}\) if and only if \(n\equiv 0,2,9,11 \pmod{12}\). 
We also show that a complete multipartite graph is pseudo-Gorenstein\(^{*}\) precisely when it is complete bipartite and its largest part has odd size, and we obtain a simple parity criterion for Cameron--Walker graphs. Finally, we study the behavior of the pseudo-Gorenstein\(^{*}\) property under \(C\)-suspension: we prove preservation under suspensions over  vertex covers, determine exactly when full suspensions of cycles and paths are pseudo-Gorenstein\(^{*}\), and classify suspensions over maximal independent sets for cycles and paths. Throughout, the main tools are the deletion--contraction recursion for independence polynomials and the relationship between \(P_G(-1)\), the multiplicity of the root \(-1\), and the degree of the \(h\)-polynomial.

\section{Preliminaries}

We retain the notation of the introduction. For a graph \(G\), write $h_G(t):=h_{S/I(G)}(t)$ and
\[
h_G(t)=\sum_{i=0}^{\alpha(G)} h_i(G)t^i
\]
where \(h_i(G)=0\) for \(i>\deg h_G(t)\). We also use the notation $\mathfrak a(G) := \mathfrak a(S/I(G))$ for the $\mathfrak a$-invariant of $G$ where
\[
\mathfrak a(G)=\deg h_G(t)-\alpha(G),
\]
and
\[
M(G):=\ord_{x=-1}P_G(x),
\]
the multiplicity of \(-1\) as a root of the independence polynomial \(P_G(x)\).

The following identity expresses the coefficient of $t^{\alpha(G)}$ in the \(h\)-polynomial of \(S/I(G)\) in terms of the
independence polynomial of \(G\).

\begin{prop}\label{prop:h-from-independence}
Let \(\alpha=\alpha(G)\). We have
\(h_{\alpha}(G)=(-1)^{\alpha}P_G(-1)\).
\end{prop}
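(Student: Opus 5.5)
We will prove the identity by expanding the formula recalled in the introduction,
\[
h_G(t)=(1-t)^{\alpha}P_G\!\left(\frac{t}{1-t}\right),
\]
which holds because \(S/I(G)\) is the Stanley--Reisner ring of the independence complex of \(G\). The \(f\)-vector of that complex is \((g_0,g_1,\dots,g_\alpha)\), and its dimension is \(\alpha-1\), so the Krull dimension equals \(\alpha\). Substituting \(P_G(x)=\sum_{i=0}^{\alpha}g_ix^i\) gives
\[
h_G(t)=\sum_{i=0}^{\alpha} g_i\, t^i(1-t)^{\alpha-i},
\]
which is a polynomial of degree at most \(\alpha\). This justifies writing \(h_G(t)=\sum_{i=0}^{\alpha}h_i(G)t^i\).

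The main step is to extract the coefficient of \(t^{\alpha}\). In the summand \(g_i t^i(1-t)^{\alpha-i}\), that coefficient is \(g_i\) times the top coefficient of \((1-t)^{\alpha-i}\), namely \((-1)^{\alpha-i}\). Hence
\[
h_\alpha(G)=\sum_{i=0}^{\alpha}(-1)^{\alpha-i}g_i=(-1)^{\alpha}\sum_{i=0}^{\alpha}g_i(-1)^i=(-1)^{\alpha}P_G(-1),
\]
using \((-1)^{-i}=(-1)^i\).

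An equivalent route avoids the expansion: substitute \(t=1/u\) and take the limit \(u\to 0\), or note that \(t^{-\alpha}h_G(t)=(t^{-1}-1)^{\alpha}P_G\bigl(1/(t^{-1}-1)\bigr)\) and evaluate as \(t\to\infty\). This gives \((-1)^{\alpha}P_G(-1)\) directly.

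The only point needing care is the convention. If the actual degree of \(h_G\) is less than \(\alpha\), we still have \(h_\alpha(G)=0=(-1)^\alpha P_G(-1)\), which matches the convention \(h_i(G)=0\) for \(i>\deg h_G(t)\) stated in the preliminaries. So there is no real obstacle: the statement follows from a one-line coefficient computation once the Stanley--Reisner formula is granted.
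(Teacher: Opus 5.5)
Your proposal is correct and is the same argument as the paper's proof. Both expand $(1-t)^{\alpha}P_G\bigl(\tfrac{t}{1-t}\bigr)=\sum_{i} g_i t^i(1-t)^{\alpha-i}$ and read off the coefficient of $t^{\alpha}$ as $\sum_i g_i(-1)^{\alpha-i}=(-1)^{\alpha}P_G(-1)$; your limit argument and your remark on the case $\deg h_G<\alpha$ are valid but not needed.
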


\begin{proof}
Since \(S/I(G)\) is the Stanley--Reisner ring of the independence complex of \(G\),
the standard relation between the \(f\)-vector and \(h\)-vector yields
\[
h_G(t)=(1-t)^{\alpha}P_G\!\left(\frac{t}{1-t}\right)=\sum_{i=0}^{\alpha} g_i t^i(1-t)^{\alpha-i},
\]
where \(g_i\) is the number of independent sets of \(G\) of size \(i\).  Taking the coefficient of \(t^\alpha\)
gives
\[
h_{\alpha}(G)=\sum_{i=0}^{\alpha} g_i(-1)^{\alpha-i}
=(-1)^{\alpha}P_G(-1). \qedhere
\]
\end{proof}
The next theorem, proved in \cite[Theorem 4.4]{biermann2026}, connects the degree of
the \(h\)-polynomial with the root \(-1\) of the independence polynomial.
\begin{thm}\cite[Theorem 4.4]{biermann2026}\label{thm:deg-via-ord}
For every finite simple graph \(G\),
\[
\deg h_G(t)=\alpha(G)-M(G).
\]
\end{thm}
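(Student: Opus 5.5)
We prove \(\deg h_G(t)=\alpha(G)-M(G)\) by working directly with the identity from the proof of \cref{prop:h-from-independence},
\[
h_G(t)=(1-t)^{\alpha}P_G\!\left(\frac{t}{1-t}\right),\qquad \alpha=\alpha(G),
\]
and reading off the degree through a change of variables that makes the root \(x=-1\) of \(P_G\) correspond to the behaviour of \(h_G\) at \(t=\infty\). Substitute \(x=t/(1-t)\), so \(t=x/(1+x)\) and \(1-t=1/(1+x)\). Then
\[
P_G(x)=(1+x)^{\alpha}\,h_G\!\left(\frac{x}{1+x}\right).
\]
Write \(h_G(t)=\sum_{i=0}^{s}h_it^i\) with \(h_s\neq 0\), where \(s=\deg h_G\le\alpha\). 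This gives
\[
P_G(x)=\sum_{i=0}^{s}h_i\,x^i(1+x)^{\alpha-i}=(1+x)^{\alpha-s}\sum_{i=0}^{s}h_i\,x^i(1+x)^{s-i}.
\]

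Set \(Q(x)=\sum_{i=0}^{s}h_i x^i(1+x)^{s-i}\). Then \(P_G(x)=(1+x)^{\alpha-s}Q(x)\), so it remains to show \(Q(-1)\neq 0\). Evaluating at \(x=-1\), every term with \(i<s\) vanishes because of the factor \((1+x)^{s-i}\). The only surviving term is \(h_s(-1)^s\), which is nonzero. Hence \(M(G)=\ord_{x=-1}P_G(x)=\alpha-s\), which is the claimed formula \(\deg h_G(t)=\alpha(G)-M(G)\).

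The step needing care is the legitimacy of the substitution: it is an identity of rational functions, and the manipulation above stays in polynomials once we multiply through by \((1+x)^\alpha\). Concretely, starting from \(h_G(t)=\sum_i g_it^i(1-t)^{\alpha-i}\) and inverting, we get the polynomial identity \(P_G(x)=\sum_i h_ix^i(1+x)^{\alpha-i}\). This holds as polynomials in \(x\), since both sides agree at infinitely many points. We must also confirm \(s\le\alpha\), which is immediate from the displayed formula for \(h_G\). No other obstacle is expected; this is essentially the argument cited from \cite[Theorem 4.4]{biermann2026}.
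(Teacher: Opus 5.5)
Your proof is correct and complete. Inverting the substitution gives the polynomial identity $P_G(x)=\sum_{i=0}^{s}h_i x^i(1+x)^{\alpha-i}$. This is valid because $s\le\alpha$ and both sides agree for all $x\neq -1$. Factoring out $(1+x)^{\alpha-s}$ leaves a cofactor $Q$ with $Q(-1)=(-1)^s h_s\neq 0$, so $M(G)=\alpha-s$ exactly.

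The paper itself gives no argument for this statement; it imports it from \cite[Theorem 4.4]{biermann2026}. Your proof therefore supplies a self-contained derivation from the identity already used in \Cref{prop:h-from-independence}.

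An equally short alternative runs in the forward direction. Write $P_G(x)=(1+x)^{M}R(x)$ with $M=M(G)$, $R(-1)\neq 0$, and $\deg R=\alpha-M$ (since $g_\alpha\neq 0$). Then $h_G(t)=(1-t)^{\alpha-M}R\bigl(\tfrac{t}{1-t}\bigr)$. The computation in the proof of \Cref{prop:h-from-independence} then shows that its coefficient of $t^{\alpha-M}$ is $(-1)^{\alpha-M}R(-1)\neq 0$.

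Either route gives a useful byproduct. The leading coefficient of $h_G(t)$ equals $(-1)^{\alpha-M}$ times the value at $x=-1$ of $P_G(x)/(1+x)^{M}$. This extends \Cref{prop:h-from-independence} to graphs with $M(G)>0$.
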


The following is an immediate corollary of the above result from \cite{biermann2026}.

\begin{cor}\label{cor:a-via-M}
For every finite simple graph \(G\), we have \(\mathfrak a(G)=-M(G)\). In particular,
\[
\mathfrak a(G)=0 \iff P_G(-1)\neq 0.
\]
\end{cor}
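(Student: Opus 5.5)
The plan is to derive the corollary directly from \cref{thm:deg-via-ord} together with the definition of the $\mathfrak a$-invariant. By definition, $\mathfrak a(G)=\deg h_G(t)-\alpha(G)$. Substituting $\deg h_G(t)=\alpha(G)-M(G)$ from \cref{thm:deg-via-ord} gives
\[
\mathfrak a(G)=\alpha(G)-M(G)-\alpha(G)=-M(G).
\]
This is the first assertion.

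For the equivalence, $\mathfrak a(G)=0$ holds exactly when $M(G)=\ord_{x=-1}P_G(x)=0$. The polynomial $P_G(x)$ is nonzero, since its constant term is $g_0=1$, the empty set being independent. So its order of vanishing at $-1$ is a well-defined nonnegative integer, and it equals $0$ precisely when $-1$ is not a root, that is, when $P_G(-1)\neq 0$.

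As an optional consistency check, \cref{prop:h-from-independence} gives $h_{\alpha}(G)=(-1)^{\alpha}P_G(-1)$. Hence $P_G(-1)\neq 0$ is the same as the coefficient of $t^{\alpha}$ in $h_G(t)$ being nonzero. Since $\deg h_G\le\alpha$ always holds, this is the same as $\deg h_G=\alpha$, i.e.\ $\mathfrak a(G)=0$. This recovers the ``in particular'' statement without even needing \cref{thm:deg-via-ord}.

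There is no real obstacle here. The only point needing care is that $P_G\not\equiv 0$, so that $M(G)$ is finite, and this is guaranteed by $g_0=1$.
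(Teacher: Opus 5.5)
Your proof is correct and matches the paper, which states this corollary as an immediate consequence of \Cref{thm:deg-via-ord}: substitute $\deg h_G(t)=\alpha(G)-M(G)$ into $\mathfrak a(G)=\deg h_G(t)-\alpha(G)$, then note that $M(G)=0$ exactly when $P_G(-1)\neq 0$. Your side observation is also valid: the ``in particular'' part follows directly from \Cref{prop:h-from-independence} together with $\deg h_G(t)\le\alpha(G)$.
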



Combining the definition of pseudo-Gorenstein\(^{*}\) from the introduction with
\Cref{prop:h-from-independence} and \Cref{cor:a-via-M}, we obtain the following
criterion.

\begin{cor}\label{cor:PGstar-criterion}
A finite simple graph \(G\) is pseudo-Gorenstein\(^{*}\) if and only if \(P_G(-1)=(-1)^{\alpha(G)}\).
\end{cor}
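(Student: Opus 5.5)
The plan is to unwind the definition of pseudo-Gorenstein$^{*}$ into two numerical conditions and translate each one using \Cref{prop:h-from-independence} and \Cref{cor:a-via-M}. By definition, $G$ is pseudo-Gorenstein$^{*}$ exactly when two things hold. First, $\mathfrak a(G)=0$, which means $\deg h_G(t)=\alpha(G)$. Second, the top coefficient of $h_G(t)$ equals $1$. When the first condition holds, the top coefficient is precisely $h_{\alpha(G)}(G)$, so the pair of conditions becomes
\[
\deg h_G(t)=\alpha(G)\quad\text{and}\quad h_{\alpha(G)}(G)=1 .
\]

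For the forward direction, assume $G$ is pseudo-Gorenstein$^{*}$. Then $h_{\alpha(G)}(G)=1$. Applying \Cref{prop:h-from-independence} gives $(-1)^{\alpha(G)}P_G(-1)=1$. Multiplying both sides by $(-1)^{\alpha(G)}$ yields $P_G(-1)=(-1)^{\alpha(G)}$.

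For the converse, assume $P_G(-1)=(-1)^{\alpha(G)}$. In particular $P_G(-1)\neq 0$, so \Cref{cor:a-via-M} gives $\mathfrak a(G)=0$, that is, $\deg h_G(t)=\alpha(G)$. Hence the top coefficient of $h_G(t)$ is $h_{\alpha(G)}(G)$. By \Cref{prop:h-from-independence} this equals $(-1)^{\alpha(G)}P_G(-1)=(-1)^{2\alpha(G)}=1$, so $G$ is pseudo-Gorenstein$^{*}$.

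There is essentially no obstacle here. The only point requiring care is the observation that \Cref{prop:h-from-independence} computes the coefficient of $t^{\alpha(G)}$, and this coincides with the top coefficient $h_s$ only when $\deg h_G(t)=\alpha(G)$. That is exactly why the nonvanishing of $P_G(-1)$, supplied through \Cref{cor:a-via-M} (and ultimately \Cref{thm:deg-via-ord}), is needed before identifying $h_{\alpha(G)}(G)$ with the top coefficient.
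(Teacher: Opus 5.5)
Your proof is correct and matches the paper, which obtains this corollary directly from the definition together with \Cref{prop:h-from-independence} and \Cref{cor:a-via-M}, exactly as you do. You also handle the one point worth making explicit: $h_{\alpha(G)}(G)$ can be identified with the top coefficient only once $\deg h_G(t)=\alpha(G)$ is known.
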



Lastly, we recall the following standard vertex-deletion recursion for independence polynomials.

\begin{lem}\label{lem:delete_contract}
Let \(G\) be a finite simple graph and let \(v\in V(G)\). Then
\[
P_G(x)=P_{G-v}(x)+x\,P_{G-N[v]}(x).
\]
\end{lem}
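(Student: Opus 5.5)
The recursion in \Cref{lem:delete_contract} follows from a direct bijective decomposition of the independent sets of \(G\), sorted by whether they contain the vertex \(v\). Write \(\mathcal I(H)\) for the family of independent sets of a graph \(H\), including the empty set. Then
\[
P_H(x)=\sum_{A\in\mathcal I(H)}x^{|A|},
\]
which is just the definition of \(P_H\) with the coefficients \(g_i\) regrouped set by set.

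First, I will split \(\mathcal I(G)\) into two disjoint subfamilies: \(\mathcal I_0=\{A\in\mathcal I(G): v\notin A\}\) and \(\mathcal I_1=\{A\in\mathcal I(G): v\in A\}\).

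\begin{itemize}
\item[(i)] A set \(A\subseteq V(G)\setminus\{v\}\) is independent in \(G\) if and only if it is independent in the induced subgraph \(G-v\). Hence \(\mathcal I_0=\mathcal I(G-v)\), and the sets in \(\mathcal I_0\) contribute exactly \(P_{G-v}(x)\).
\item[(ii)] If \(v\in A\) and \(A\) is independent, then \(A\) contains no neighbor of \(v\). So \(B=A\setminus\{v\}\subseteq V(G)\setminus N[v]\), and \(B\) is independent in the induced subgraph \(G-N[v]\).
\item[(iii)] Conversely, if \(B\) is independent in \(G-N[v]\), then \(B\cup\{v\}\) is independent in \(G\). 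No vertex of \(B\) is adjacent to \(v\), and edges among vertices of \(B\) are the same in \(G\) and in the induced subgraph.
\end{itemize}

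Steps (ii) and (iii) show that \(A\mapsto A\setminus\{v\}\) is a bijection \(\mathcal I_1\to\mathcal I(G-N[v])\) that lowers the size by exactly one. The sets in \(\mathcal I_1\) therefore contribute \(x\,P_{G-N[v]}(x)\).

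Adding the two contributions gives the claimed identity. When \(G-N[v]\) is the empty graph, its independence polynomial is \(1\), coming from the empty set, and the formula still holds.

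There is no real obstacle here. The only point needing care is the converse direction (iii): it relies on \(G-v\) and \(G-N[v]\) being \emph{induced} subgraphs, so that independence is checked against the same edges as in \(G\).
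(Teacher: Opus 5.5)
Your proof is correct: splitting the independent sets of \(G\) according to whether they contain \(v\), together with the size-lowering bijection \(A\mapsto A\setminus\{v\}\) onto \(\mathcal I(G-N[v])\), is exactly the standard argument. The paper gives no proof of its own, only recalling the lemma as the ``standard vertex-deletion recursion,'' so your argument supplies precisely the omitted justification, including the empty-graph convention \(P=1\).
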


\section{Pseudo-Gorenstein$^{*}$ Cycles and Paths}\label{sec:cycles_paths}

In this section, we focus on paths and cycles and investigate when these fundamental classes of graph families are pseudo-Gorenstein$^{*}$. In view of \Cref{cor:PGstar-criterion}, it is enough to
determine the value of the independence polynomial at \(x=-1\).

The next two lemmas record the values of the independence polynomials of paths
and cycles at \(x=-1\), which will be used repeatedly later. They can be derived
from \cite[Corollaries 4.8 and 5.6]{Kara_Vien_2026}, so we omit the proofs.
 For convenience, let \(P_0\) denote the empty graph, so that  \(P_{P_0}(x)=1\).

\begin{lem}\label{lem:path-minus-one}
Let \(p_n:=P_{P_n}(-1)\) for \(n\ge 0\). Then
\[
p_0=1,\qquad p_1=0,\qquad p_n=p_{n-1}-p_{n-2}\ \ \text{for } n\ge 2.
\]
Consequently, \(p_n\) is periodic of period \(6\), and
\[
p_n=
\begin{cases}
1,& n\equiv 0,5 \pmod 6,\\[4pt]
0,& n\equiv 1,4 \pmod 6,\\[4pt]
-1,& n\equiv 2,3 \pmod 6.
\end{cases}
\]
\end{lem}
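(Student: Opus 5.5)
The plan is to apply the vertex-deletion recursion of \Cref{lem:delete_contract} at an endpoint of the path, evaluate at $x=-1$, and then read off the periodicity from the resulting linear recurrence.

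\textbf{Initial values.} For $n=0$, $P_0$ is the empty graph, so $P_{P_0}(x)=1$ and $p_0=1$. For $n=1$, the graph is a single vertex, so $P_{P_1}(x)=1+x$ and $p_1=0$.

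\textbf{Recurrence.} For $n\ge 2$, label the vertices of $P_n$ as $1,2,\dots,n$ along the path and take $v=1$. Deleting $v$ leaves $P_{n-1}$. The closed neighbourhood is $N[v]=\{1,2\}$, so deleting it leaves $P_{n-2}$; when $n=2$ this is the empty graph $P_0$, which is exactly why the convention $P_{P_0}(x)=1$ is needed. \Cref{lem:delete_contract} then gives
\[
P_{P_n}(x)=P_{P_{n-1}}(x)+x\,P_{P_{n-2}}(x),
\]
and setting $x=-1$ yields $p_n=p_{n-1}-p_{n-2}$.

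\textbf{Periodicity.} Iterating the recurrence twice gives
\[
p_n=p_{n-1}-p_{n-2}=(p_{n-2}-p_{n-3})-p_{n-2}=-p_{n-3}
\]
for $n\ge 3$. Hence $p_n=-p_{n-3}=p_{n-6}$ for $n\ge 6$, so the sequence has period $6$. Equivalently, the characteristic roots of the recurrence are primitive sixth roots of unity.

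\textbf{Table of values.} It remains to compute the first six terms:
\[
p_0=1,\quad p_1=0,\quad p_2=-1,\quad p_3=-1,\quad p_4=0,\quad p_5=1.
\]
These agree with the stated case table, and periodicity extends the table to all $n\ge 0$.

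There is no real obstacle here. The only point needing care is the boundary case $n=2$, where $G-N[v]$ is empty, and the empty-graph convention handles it.
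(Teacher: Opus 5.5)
Your proof is correct and complete. The paper gives no argument for this lemma: it omits the proof and says the values follow from \cite[Corollaries 4.8 and 5.6]{Kara_Vien_2026}.

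Your route is self-contained instead. Deleting an endpoint via \Cref{lem:delete_contract} yields exactly the recurrence $P_{P_n}(x)=P_{P_{n-1}}(x)+x\,P_{P_{n-2}}(x)$ that the lemma states. The two-step iteration $p_n=-p_{n-3}$ (valid for $n\ge 3$, hence $p_n=p_{n-6}$ for $n\ge 6$) then gives period $6$ cleanly. Your computed terms $1,0,-1,-1,0,1$ match the table.

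Your version buys independence from the external reference, and it makes explicit why the convention $P_{P_0}(x)=1$ is needed at $n=2$. The citation buys only brevity.

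The same one-vertex deletion also covers \Cref{lem:cycle-minus-one}, whose proof the paper likewise omits. Applied at any vertex of $C_n$, it gives $C_n-v=P_{n-1}$ and $C_n-N[v]=P_{n-3}$, so $c_n=p_{n-1}-p_{n-3}$ at once.
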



\begin{lem}\label{lem:cycle-minus-one}
Let \(c_n:=P_{C_n}(-1)\) for \(n\ge 3\). Then
\[
c_n=p_{n-1}-p_{n-3},
\]
where \(p_n\) is as in \Cref{lem:path-minus-one}. In particular, \(c_n\) is
periodic of period \(6\), and
\[
c_n=
\begin{cases}
2,& n\equiv 0 \pmod 6,\\[4pt]
1,& n\equiv 1,5 \pmod 6,\\[4pt]
-1,& n\equiv 2,4 \pmod 6,\\[4pt]
-2,& n\equiv 3 \pmod 6.
\end{cases}
\]
\end{lem}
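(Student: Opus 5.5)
We prove \Cref{lem:cycle-minus-one} by applying the vertex-deletion recursion of \Cref{lem:delete_contract} to one vertex of the cycle, which reduces everything to the path values of \Cref{lem:path-minus-one}.

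Fix \(n\ge 3\) and a vertex \(v\) of \(C_n\). Deleting \(v\) leaves the path \(P_{n-1}\). Deleting the closed neighborhood \(N[v]\), which has three vertices, leaves the path \(P_{n-3}\). For \(n=3\) this is the empty graph \(P_0\), whose independence polynomial is \(1\) by convention. Hence
\[
P_{C_n}(x)=P_{P_{n-1}}(x)+x\,P_{P_{n-3}}(x),
\]
and evaluating at \(x=-1\) gives \(c_n=p_{n-1}-p_{n-3}\).

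Since \(p_n\) has period \(6\) by \Cref{lem:path-minus-one}, so does \(c_n\). It remains to evaluate \(c_n\) for one representative of each residue class modulo \(6\), using the table in \Cref{lem:path-minus-one}:
\[
\begin{aligned}
n\equiv 0:&\quad p_5-p_3=1-(-1)=2,\\
n\equiv 1:&\quad p_0-p_4=1-0=1,\\
n\equiv 2:&\quad p_1-p_5=0-1=-1,\\
n\equiv 3:&\quad p_2-p_0=-1-1=-2,\\
n\equiv 4:&\quad p_3-p_1=-1-0=-1,\\
n\equiv 5:&\quad p_4-p_2=0-(-1)=1.
\end{aligned}
\]
These are exactly the values claimed in the statement.

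The only delicate point is the small case \(n=3\), where \(G-N[v]\) is empty. The convention \(P_{P_0}(x)=1\) handles it, and it agrees with the direct computation \(P_{C_3}(x)=1+3x\), which gives \(c_3=-2\). No other obstacle arises; the remaining work is the table lookup above.
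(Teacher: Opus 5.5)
Your proof is correct and complete. The paper itself omits the proof and defers to \cite[Corollaries 4.8 and 5.6]{Kara_Vien_2026}, so there is no in-paper argument to compare against. Your derivation, which applies \Cref{lem:delete_contract} at one vertex, uses the convention $P_{P_0}(x)=1$ for $n=3$, and then reads off residues from \Cref{lem:path-minus-one}, is the natural self-contained argument behind the identity $c_n=p_{n-1}-p_{n-3}$. It uses only tools already stated in the paper.
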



Combined with \Cref{cor:PGstar-criterion}, these evaluations reduce the classification of pseudo-Gorenstein\(^{*}\) cycles and paths to a finite congruence check modulo \(12\).

\begin{thm}\label{thm:cycle}
A cycle \(C_n\) is pseudo-Gorenstein$^{*}$ if and only if \(n\equiv 1,2,5,10 \pmod{12}\).
\end{thm}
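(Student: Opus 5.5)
By \Cref{cor:PGstar-criterion}, $C_n$ is pseudo-Gorenstein$^{*}$ if and only if $c_n=P_{C_n}(-1)=(-1)^{\alpha(C_n)}$. So the plan is to compare the value $c_n$ from \Cref{lem:cycle-minus-one} with the sign $(-1)^{\alpha(C_n)}$, using the standard fact $\alpha(C_n)=\lfloor n/2\rfloor$ for $n\ge 3$. Since $c_n$ has period $6$ and the parity of $\lfloor n/2\rfloor$ has period $4$, the combined condition depends only on $n \bmod 12$. This reduces the theorem to a finite check over the twelve residues.

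First, \Cref{lem:cycle-minus-one} immediately rules out $n\equiv 0,3\pmod 6$, because there $|c_n|=2\neq 1$. These are the residues $0,3,6,9 \pmod{12}$.

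For the remaining residues we compare signs. When $n\equiv 1,5\pmod 6$ we have $c_n=1$, so we need $\lfloor n/2\rfloor$ even, i.e.\ $n\equiv 0,1\pmod 4$. Running through the residues mod $12$:
\begin{itemize}
\item $n\equiv 1$: we have $n\equiv 1\pmod 4$, so the condition holds.
\item $n\equiv 5$: we have $n\equiv 1\pmod 4$, so it holds.
\item $n\equiv 7$: we have $n\equiv 3\pmod 4$, so it fails.
\item $n\equiv 11$: we have $n\equiv 3\pmod 4$, so it fails.
\end{itemize}
When $n\equiv 2,4\pmod 6$ we have $c_n=-1$, so we need $\lfloor n/2\rfloor$ odd, i.e.\ $n\equiv 2,3\pmod 4$:
\begin{itemize}
\item $n\equiv 2$: it holds.
\item $n\equiv 4$: it fails.
\item $n\equiv 8$: it fails.
\item $n\equiv 10$: it holds.
\end{itemize}
This leaves exactly $n\equiv 1,2,5,10\pmod{12}$, as claimed.

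There is no real obstacle here. The only points needing care are that $\alpha(C_n)=\lfloor n/2\rfloor$ requires $n\ge 3$, and that the residue bookkeeping must be done correctly. A compact way to present the check is a twelve-row table listing $n \bmod 12$, $c_n$, and $(-1)^{\lfloor n/2\rfloor}$.
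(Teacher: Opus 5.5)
Your proposal is correct and follows essentially the same approach as the paper: apply \Cref{cor:PGstar-criterion}, use the period-$6$ values of $P_{C_n}(-1)$ from \Cref{lem:cycle-minus-one} and the period-$4$ sign $(-1)^{\lfloor n/2\rfloor}$, and check the twelve residues mod $12$. The paper does this check with a table; your approach of first discarding the residues where $|c_n|=2$ and then comparing signs gives the same answer $n\equiv 1,2,5,10 \pmod{12}$.
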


\begin{proof}
Let \(\alpha=\alpha(C_n)=\lfloor n/2\rfloor\). By \Cref{cor:PGstar-criterion},
\(C_n\) is pseudo-Gorenstein$^{*}$ if and only if
\[
P_{C_n}(-1)=(-1)^{\alpha}.
\]
By \Cref{lem:cycle-minus-one}, \(P_{C_n}(-1)\) is periodic modulo \(6\), while
\((-1)^{\lfloor n/2\rfloor}\) is periodic modulo \(4\). Hence the comparison is
periodic modulo \(12\). Checking \(n=0,1,\dots,11\), we obtain
\[
\begin{array}{c|cccccccccccc}
n \pmod{12} & 0&1&2&3&4&5&6&7&8&9&10&11\\
\hline
P_{C_n}(-1)
&2&1&-1&-2&-1&1&2&1&-1&-2&-1&1\\
\hline
(-1)^{\lfloor n/2\rfloor}
&1&1&-1&-1&1&1&-1&-1&1&1&-1&-1
\end{array}
\]
Thus \(P_{C_n}(-1)=(-1)^{\alpha(C_n)}\) exactly when \(n\equiv 1,2,5,10 \pmod{12}\).
\end{proof}

\begin{thm}\label{thm:path}
A path \(P_n\) is pseudo-Gorenstein$^{*}$ if and only if \(n\equiv 0,2,9,11 \pmod{12}\).
\end{thm}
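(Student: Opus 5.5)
The proof will follow the proof of \Cref{thm:cycle} almost verbatim. Here \(\alpha(P_n)=\lceil n/2\rceil\), since one can take every other vertex starting from an endpoint. For \(n=0\) we have \(\alpha=0\) and \(P_{P_0}(-1)=1\), consistent with the convention \(P_{P_0}(x)=1\). By \Cref{cor:PGstar-criterion}, \(P_n\) is pseudo-Gorenstein\(^{*}\) if and only if
\[
p_n=(-1)^{\lceil n/2\rceil},
\]
where \(p_n=P_{P_n}(-1)\) is given explicitly by \Cref{lem:path-minus-one}.

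The first step is periodicity. By \Cref{lem:path-minus-one}, \(p_n\) is periodic of period \(6\). The sign \((-1)^{\lceil n/2\rceil}\) is periodic of period \(4\), because \(\lceil (n+4)/2\rceil=\lceil n/2\rceil+2\). Hence the equality \(p_n=(-1)^{\lceil n/2\rceil}\) depends only on \(n \bmod 12\), and it suffices to check the residues \(n=0,1,\dots,11\).

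The second step is the table check. Reading off \Cref{lem:path-minus-one}, the values of \(p_n\) for \(n=0,\dots,11\) are
\[
1,0,-1,-1,0,1,1,0,-1,-1,0,1.
\]
The values of \((-1)^{\lceil n/2\rceil}\) for \(n=0,\dots,11\) are
\[
1,-1,-1,1,1,-1,-1,1,1,-1,-1,1.
\]
Comparing the two lists entry by entry, they agree exactly at \(n=0,2,9,11\). The agreements are \(1=1\) at \(n=0\), \(-1=-1\) at \(n=2\), \(-1=-1\) at \(n=9\), and \(1=1\) at \(n=11\). At every other residue the values differ: either \(p_n=0\), or \(p_n\) has the opposite sign, as at \(n=3,5,6,8\). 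I will present this comparison as a table analogous to the one in the proof of \Cref{thm:cycle}.

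There is no real obstacle here. The only points needing care are using the correct independence number \(\lceil n/2\rceil\) for paths, rather than the value \(\lfloor n/2\rfloor\) used for cycles, and handling the degenerate case \(n=0\) consistently with the convention \(P_{P_0}(x)=1\). It is also worth noting that the residues with \(p_n=0\), namely \(n\equiv 1,4 \pmod 6\), fail already because \(\mathfrak a(P_n)\neq 0\) by \Cref{cor:a-via-M}.
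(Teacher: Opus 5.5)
Your proposal is correct and matches the paper's proof essentially verbatim: both reduce via \Cref{cor:PGstar-criterion} to comparing $P_{P_n}(-1)$, which has period $6$ by \Cref{lem:path-minus-one}, with $(-1)^{\lceil n/2\rceil}$, which has period $4$, and then check the twelve residues modulo $12$. Your two rows of values and the agreement set $\{0,2,9,11\}$ coincide with the paper's table.
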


\begin{proof}
Let \(\alpha=\alpha(P_n)=\lceil n/2\rceil\). By \Cref{cor:PGstar-criterion},
\(P_n\) is pseudo-Gorenstein$^{*}$ if and only if
\[
P_{P_n}(-1)=(-1)^{\alpha}.
\]
By \Cref{lem:path-minus-one}, \(P_{P_n}(-1)\) is periodic modulo \(6\), while
\((-1)^{\lceil n/2\rceil}\) is periodic modulo \(4\). Hence the comparison is
periodic modulo \(12\). Checking \(n=0,1,\dots,11\), we obtain
\[
\begin{array}{c|cccccccccccc}
n \pmod{12} & 0&1&2&3&4&5&6&7&8&9&10&11\\
\hline
P_{P_n}(-1)
&1&0&-1&-1&0&1&1&0&-1&-1&0&1\\
\hline
(-1)^{\lceil n/2\rceil}
&1&-1&-1&1&1&-1&-1&1&1&-1&-1&1
\end{array}
\]
Thus \(P_{P_n}(-1)=(-1)^{\alpha(P_n)}\) exactly when \(n\equiv 0,2,9,11 \pmod{12}\).
\end{proof}

\begin{notation}
For later use, define
\[
a_n:=(-1)^{\lfloor n/2\rfloor}P_{C_n}(-1),
\qquad
b_n:=(-1)^{\lceil n/2\rceil}P_{P_n}(-1).
\]
\end{notation}

\begin{cor}\label{cor:path-cycle-signed-values}
Both sequences  $(a_n)$ and $(b_n)$ are periodic modulo \(12\), with
\[
a_n=
\begin{cases}
1,& n\equiv 1,2,5,10 \pmod{12},\\[4pt]
-1,& n\equiv 4,7,8,11 \pmod{12},\\[4pt]
2,& n\equiv 0,3 \pmod{12},\\[4pt]
-2,& n\equiv 6,9 \pmod{12},
\end{cases} \qquad \text{ and }\qquad 
b_n=
\begin{cases}
1,& n\equiv 0,2,9,11 \pmod{12},\\[4pt]
-1,& n\equiv 3,5,6,8 \pmod{12},\\[4pt]
0,& n\equiv 1,4,7,10 \pmod{12}.
\end{cases}
\]
\end{cor}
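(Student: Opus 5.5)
The plan is to read off both sequences directly from the two tables computed in the proofs of \Cref{thm:cycle} and \Cref{thm:path}, after first justifying that each is periodic modulo \(12\).

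\emph{Periodicity.} By \Cref{lem:cycle-minus-one} the sequence \(P_{C_n}(-1)\) has period \(6\). The sign \((-1)^{\lfloor n/2\rfloor}\) has period \(4\), because \(\lfloor (n+4)/2\rfloor=\lfloor n/2\rfloor+2\). Hence their product \(a_n\) has period \(\operatorname{lcm}(6,4)=12\). The same argument applies to \(b_n\): \(P_{P_n}(-1)\) has period \(6\) by \Cref{lem:path-minus-one}, and \((-1)^{\lceil n/2\rceil}\) has period \(4\). A single caveat: \(c_n\) is only defined for \(n\ge 3\). For residues \(0,1,2\) we therefore read the table entries as the values at \(n=12,13,14\), exactly as in the proof of \Cref{thm:cycle}.

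\emph{Computing \(a_n\).} Multiply the two rows of the table in the proof of \Cref{thm:cycle} entrywise. For residues \(0,\dots,11\) this gives
\[
a_n:\ 2,\,1,\,-1\cdot(-1)=1,\,(-2)(-1)=2,\,-1,\,1,\,-2,\,-1,\,-1,\,-2,\,1,\,-1.
\]
Grouping by value yields \(a_n=1\) for \(n\equiv1,2,5,10\); \(a_n=-1\) for \(n\equiv4,7,8,11\); \(a_n=2\) for \(n\equiv0,3\); and \(a_n=-2\) for \(n\equiv6,9\).

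\emph{Computing \(b_n\).} Doing the same with the table in the proof of \Cref{thm:path} gives
\[
b_n:\ 1,\,0,\,1,\,-1,\,0,\,-1,\,-1,\,0,\,-1,\,1,\,0,\,1.
\]
So \(b_n=1\) for \(n\equiv0,2,9,11\); \(b_n=-1\) for \(n\equiv3,5,6,8\); and \(b_n=0\) for \(n\equiv1,4,7,10\).

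There is no real obstacle in this argument. The only points needing care are the sign bookkeeping in these twelve products and the remark about small \(n\) for cycles.
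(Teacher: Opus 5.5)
Your proposal is correct and is essentially the paper's own argument. The paper gives this corollary no separate proof because it is read off by multiplying entrywise the two rows of the tables in the proofs of \Cref{thm:cycle} and \Cref{thm:path}, with periodicity modulo $12$ coming from the period-$6$ evaluations in \Cref{lem:path-minus-one} and \Cref{lem:cycle-minus-one} together with the period-$4$ signs. Your twelve products and the groupings by value all check out, and your remark that for cycles the residues $0,1,2$ should be read at $n=12,13,14$ is a sensible bit of extra care.
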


We next turn to another natural family of graphs, namely complete multipartite graphs, and determine exactly when they are pseudo-Gorenstein$^{*}$.

\section{Pseudo-Gorenstein$^{*}$ Multipartite Graphs}

Let $G=K_{m_1,m_2,\dots,m_k}$
be the complete multipartite graph with vertex partition
\[
V(G)=V_1\sqcup V_2\sqcup \cdots \sqcup V_k,
\qquad |V_i|=m_i \ge 1,
\]
and edge set
\[
E(G)=\{\{a,b\}: a\in V_i,\ b\in V_j,\ i\neq j\}.
\]
Set \(\alpha:=\alpha(G)\) and  notice that $\alpha (G)=\max\{m_1,\dots,m_k\}$. Then \(\dim S/I(G)=\alpha\).

\begin{lem}\label{lem:multu_indep_poly}
The independence polynomial of \(G\) is
    \[
    P_G(x)
    =
    1+\sum_{i=1}^k \sum_{r=1}^{m_i}\binom{m_i}{r}x^r
    =
    \sum_{i=1}^k (1+x)^{m_i}-(k-1).
    \]    
\end{lem}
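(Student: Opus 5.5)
The plan is to classify the independent sets of \(G\) directly and then collapse the resulting sum algebraically. The key structural observation is that a nonempty set \(A\subseteq V(G)\) is independent if and only if \(A\subseteq V_i\) for a single index \(i\). Indeed, if \(A\) met two distinct parts \(V_i\) and \(V_j\), then any \(a\in A\cap V_i\) and \(b\in A\cap V_j\) would satisfy \(\{a,b\}\in E(G)\). Conversely, no two vertices within one part are adjacent, so every subset of a single \(V_i\) is independent.

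Next I will count by size. For \(r\ge 1\), the independent sets of size \(r\) are exactly the \(r\)-subsets of the parts, and distinct parts give disjoint families of nonempty sets. Hence
\[
g_r=\sum_{i=1}^k\binom{m_i}{r},
\]
with \(\binom{m_i}{r}=0\) when \(r>m_i\). Adding the empty set, which contributes \(g_0=1\), gives the first expression
\[
P_G(x)=1+\sum_{i=1}^k\sum_{r=1}^{m_i}\binom{m_i}{r}x^r.
\]

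For the second expression, I apply the binomial theorem to each part:
\[
\sum_{r=1}^{m_i}\binom{m_i}{r}x^r=(1+x)^{m_i}-1.
\]
Summing over \(i=1,\dots,k\) and adding \(1\) yields
\[
P_G(x)=\sum_{i=1}^k(1+x)^{m_i}-k+1=\sum_{i=1}^k(1+x)^{m_i}-(k-1),
\]
as claimed.

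I do not expect a genuine obstacle here. The only point needing care is that the empty set is counted once, not once per part; this is exactly what produces the correction term \(-(k-1)\).
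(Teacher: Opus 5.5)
Your proposal is correct and follows essentially the same route as the paper: you classify independent sets as either the empty set or a nonempty subset of a single part, count \(g_r=\sum_{i=1}^k\binom{m_i}{r}\), and collapse the sum with the binomial theorem. Your explicit check that every subset of a single part is independent, and your note that counting the empty set once produces the \(-(k-1)\) term, are small clarifications the paper leaves implicit.
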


\begin{proof}
An independent set in a complete multipartite graph cannot contain vertices
from two different parts, since every vertex in \(V_i\) is adjacent to every
vertex in \(V_j\) whenever \(i\neq j\). Hence every independent set is either
the empty set or a nonempty subset of exactly one part \(V_i\). Therefore, for each \(r\ge 1\), the number of independent sets of size  \(r\) is $\sum_{i=1}^k \binom{m_i}{r}$,
where \(\binom{m_i}{r}=0\) if \(r>m_i\). It follows that
\[
P_G(x)
=
1+\sum_{r\ge 1}\left(\sum_{i=1}^k \binom{m_i}{r}\right)x^r
=
1+\sum_{i=1}^k \sum_{r=1}^{m_i}\binom{m_i}{r}x^r=\sum_{i=1}^k(1+x)^{m_i}-(k-1)\]
where the last equality follows from the binomial theorem.
\end{proof}

\begin{cor}
The complete multipartite graph \(G\) is pseudo-Gorenstein\(^{*}\) if and only if
\(k=2\) and \(\alpha\) is odd.
\end{cor}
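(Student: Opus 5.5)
By \Cref{cor:PGstar-criterion}, it suffices to decide when $P_G(-1)=(-1)^{\alpha}$. We evaluate the closed form from \Cref{lem:multu_indep_poly} at $x=-1$. Every part has $m_i\ge 1$, so $(1+x)^{m_i}$ vanishes at $x=-1$, and hence
\[
P_G(-1)=\sum_{i=1}^k 0^{m_i}-(k-1)=-(k-1)=1-k.
\]
The whole question therefore becomes the equation $1-k=(-1)^{\alpha}$.

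We solve this equation over the integers $k\ge 1$, recalling that $\alpha=\max_i m_i$. The right-hand side is $\pm1$, so $k\in\{0,2\}$. Since $k\ge1$, we must have $k=2$. Then the left-hand side is $-1$, and this holds exactly when $\alpha$ is odd.

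For the converse, suppose $k=2$ and $\alpha$ is odd. Then $P_G(-1)=-1=(-1)^{\alpha}$, so $G$ is pseudo-Gorenstein$^{*}$. For completeness we record the remaining cases. If $k=1$, then $G$ is edgeless and $P_G(-1)=0$, so $\mathfrak a(G)\neq 0$ by \Cref{cor:a-via-M}. If $k\ge3$, then $|P_G(-1)|=k-1\ge 2$, so the top $h$-coefficient is not $1$ in absolute value.

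There is no real obstacle here. The only point to handle carefully is the evaluation $0^{m_i}=0$, which is valid because $m_i\ge1$; this is precisely the convention built into the definition of the parts.
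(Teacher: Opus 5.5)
Your proof is correct and takes essentially the same route as the paper: you evaluate the closed form of \Cref{lem:multu_indep_poly} at $x=-1$ to get $P_G(-1)=1-k$, then solve $(-1)^{\alpha}(1-k)=1$. The only difference is presentational: the paper splits the condition into $\mathfrak a(G)=0$ (equivalently $k\ge 2$) and $h_\alpha(G)=1$, whereas you apply the combined criterion of \Cref{cor:PGstar-criterion} directly.
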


\begin{proof}
 We first observe that  $ \mathfrak a (G)=0$ when $k\ge 2$; otherwise, $\mathfrak a (G)=-m_1$. To see this, evaluate $P_G(x)$ from \Cref{lem:multu_indep_poly} at \(x=-1\). Since each \(m_i\ge 1\), we have \(P_G(-1)= 1-k\).  If \(k\ge 2\), then \(P_G(-1)\neq 0\), so \(-1\) is not a root and $M(G)=0$. Thus $\mathfrak a (G)=-M(G)=0$. If \(k=1\), then \(M(G)=m_1\) and $\mathfrak a (G)<0$ in this case.  Therefore,  $\mathfrak a (G)=0$ exactly when $k\geq 2$. 

To complete the proof, it remains to understand when  
\[
h_{\alpha}(G)=(-1)^{\alpha}P_G(-1)=(-1)^{\alpha}(1-k)=1.
\]
This happens when $k=2$ and $\alpha$ is odd.
\end{proof}

\section{Pseudo-Gorenstein$^{*}$ Cameron--Walker Graphs}

In this section we consider connected Cameron--Walker graphs that are neither stars nor star triangles. By the structure theorem of \cite{CW+Graphs}, every such graph is obtained from a connected bipartite graph with bipartition \(X\sqcup Y\) by attaching at least one leaf edge to each vertex of \(X\) and an arbitrary number of pendant triangles to each vertex of \(Y\). We show that for these graphs the value \(P_G(-1)\) is always \(\pm1\), and hence the pseudo-Gorenstein\(^{*}\) property is controlled by a simple parity condition.

We adopt the following notation throughout this section. 

\begin{notation}\label{not:CW}
Let $G$ be a Cameron--Walker graph with bipartite core $X\sqcup Y$ where
$X=\{x_1,\dots,x_n\}$ and $Y=\{y_1,\dots,y_m\}$.
Assume that each $x_i$ has $f_i\ge 1$ leaf neighbors attached, and each $y_j$ has $t_j\ge 0$
pendant triangles attached. For each \(i\), let \(L_i\) denote the set of the \(f_i\ge 1\) leaves attached to \(x_i\). 
Set
\[
F:=\sum_{i=1}^n f_i,
\qquad
T:=\sum_{j=1}^m t_j,
\qquad
m_0:=|\{\,j\in[m]:t_j=0\,\}|.
\]
\end{notation}

\begin{remark}\label{rem:CW_indep_number}
Let $G$ be a Cameron-Walker graph as in \Cref{not:CW}. The independence number of $G$ is
 \[
 \alpha(G)=F+T+m_0.
 \]
 For the lower bound, take all $F$ leaves attached to the $x_i$'s; for each $y_j$ with $t_j=0$ take $y_j$; for each $y_j$ with $t_j\ge 1$, take one non-$y_j$ vertex from each of its $t_j$ pendant triangles. This gives an independent set of size $F+T+m_0$.
 Now we focus on the upper bound. For each $i$, from the block $\{x_i\}\cup L_i$, any independent set uses at most $f_i$ vertices. For each $j$, from the $y_j$-block, any independent set uses at most 1 if $t_j=0$, and at most $t_j$ if $t_j\geq 1$. Thus, we have
 \[
 \alpha (G) \leq \sum_{i=1}^n f_i +\sum_{j} \max\{1,t_j\} = F+T+m_0.
 \]
\end{remark}

The following theorem recovers \cite[Theorem 1.1]{hibi2021CW} and gives a shorter proof using our independence polynomial approach.

\begin{thm}\label{thm:CW_a}
  Let $G$ be a Cameron-Walker graph. Then \(P_G(-1)=(-1)^{n+T}\in\{\pm 1\}\).  In particular, $\mathfrak a (G)=0$.
\end{thm}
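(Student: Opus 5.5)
We compute $P_G(-1)$ by applying \Cref{lem:delete_contract} repeatedly, first at the vertices $x_1,\dots,x_n$ and then at the vertices $y_j$. The key local facts concern the evaluation at $x=-1$ of the small pieces hanging off the core:
\begin{itemize}
\item a set of $f\ge 1$ isolated leaves contributes $(1+x)^f$, which vanishes at $-1$;
\item a pendant triangle with its attachment vertex removed is an edge, contributing $P_{K_2}(-1)=1-2=-1$.
\end{itemize}

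\emph{Step 1: remove the $x_i$.} Apply \Cref{lem:delete_contract} at $x_1$:
\[
P_G(-1)=P_{G-x_1}(-1)-P_{G-N[x_1]}(-1).
\]
In $G-x_1$ the leaves $L_1$ become isolated vertices, so $P_{G-x_1}(x)=(1+x)^{f_1}P_{G'}(x)$, which vanishes at $x=-1$ because $f_1\ge1$. Hence
\[
P_G(-1)=-P_{G-N[x_1]}(-1).
\]
Now $G-N[x_1]$ deletes $x_1$, $L_1$, and the neighbors of $x_1$ in $Y$ together with their triangle edges into the $y$'s. Two things survive from the deleted $y$'s:
\begin{itemize}
\item for each removed $y_j$, its $t_j$ pendant triangles become $t_j$ disjoint edges;
\item the remaining $x_i$ still carry their leaves.
\end{itemize}
Iterating over $x_1,\dots,x_n$ (the argument applies equally to the current graph, since each remaining $x_i$ still has its leaves) gives
\[
P_G(-1)=(-1)^n\,P_H(-1),
\]
where $H$ is obtained by deleting all of $X$, all leaves, and all $y_j\in N(X)$, with their triangles turned into disjoint edges. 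Since the core is connected, every $y_j$ is adjacent to some $x_i$; this holds provided $n\ge1$, which we have because $G$ is not a star or star triangle. So every $y_j$ is deleted at some stage.

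\emph{Step 2: evaluate $H$.} Once all of $Y$ is gone, $H$ is a disjoint union of $T$ edges. Therefore
\[
P_H(-1)=(-1)^T,
\qquad\text{and hence}\qquad
P_G(-1)=(-1)^{n+T}.
\]

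\emph{Step 3: the $\mathfrak a$-invariant.} Since $P_G(-1)\neq0$, \Cref{cor:a-via-M} gives $\mathfrak a(G)=0$.

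The main obstacle is the bookkeeping in Step 1. We must check that after deleting $N[x_1]$, each later $x_i$ still has at least one leaf, so that the $(1+x)^{f_i}$ factor kills the $G-x_i$ term at every stage. We must also check that a $y_j$ adjacent to several $x_i$ is removed only once, so that its $t_j$ triangle-edges are counted exactly once. Both points are clear, because neighborhoods of the $x_i$ never contain leaves of other $x_i$, and deleted vertices stay deleted. To make this rigorous, I would phrase the argument as an induction on $n$ over the slightly more general class of graphs consisting of a bipartite core plus disjoint edges.
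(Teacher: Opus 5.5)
Your argument is correct. It rests on the same mechanism as the paper's proof. At $x=-1$ the leaf factors $(1+x)^{f_i}$ kill every configuration that omits some $x_i$, so only configurations containing all of $X$ survive. Connectivity of the core (with $n\ge 1$) then excludes all of $Y$, and what remains is $T$ disjoint edges contributing $(-1)^T$.

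The packaging differs:
\begin{itemize}
\item The paper works in one shot. It partitions the independent sets of $G$ by their trace $S$ on the whole core and writes the closed form $C_S(x)=x^{|S|}\prod_{x_i\notin S}(1+x)^{f_i}\prod_{y_j\notin S}(1+2x)^{t_j}$. It then observes that $C_S(-1)\neq 0$ forces $X\subseteq S$, hence $S=X$.
\item You apply \Cref{lem:delete_contract} $n$ times, branching only on the vertices of $X$. This spares you from writing contributions for traces containing $y$'s. The cost is the bookkeeping you yourself identify: each later $x_i$ survives the earlier contractions with its leaves intact, because $X$ is independent and $L_i\cap N_G[x_k]=\varnothing$ for $k\neq i$. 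Likewise, the graph reached after $n$ steps is exactly $G-\bigcup_i N_G[x_i]$, so each $y_j$ is removed once.
\end{itemize}
The paper's direct decomposition sidesteps this bookkeeping. It also yields the full identity $P_G(x)=\sum_S C_S(x)$, not just the value at $-1$.

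One small inconsistency in your write-up: you announce a second round of deletions at the $y_j$, but it is never needed. Every $y_j$ already lies in some $N_G[x_i]$.
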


\begin{proof}
Notice that every independent set $I$ of $G$ has a unique core intersection  $S=I\cap (X\sqcup Y)$. This $S$ is independent in the core.  For each independent set $S\subseteq X\sqcup Y$ in the bipartite core, let $C_S(x)$ denote the total contribution to $P_G(x)$ from all independent sets $I$ of $G$ satisfying $I\cap (X\sqcup Y)=S$. Then
\[
P_G(x)=\sum_{I} x^{|I|}= \sum_{S} C_S(x),
\]
where the first sum runs over all independent sets $I$ of $G$ and the second sum runs over all independent sets $S$ of the core.

We now compute $C_S(x)$. Fix such an $S$.  If $x_i\in S$, then no vertex of $L_i$ can lie in $I$. If $x_i\notin S$, then any subset of $L_i$ may be chosen, so the total contribution from the choices on $L_i$ is \((1+x)^{f_i}\). Similarly, if $y_j\in S$, then no vertex from any pendant triangle attached to $y_j$ can lie in $I$. If $y_j\notin S$, then in each pendant triangle attached to $y_j$ we may choose either no additional vertex or exactly one of the two vertices distinct from $y_j$. Hence each such triangle contributes a factor $1+2x$, and the $t_j$ pendant triangles attached to $y_j$ contribute \((1+2x)^{t_j}\). Therefore
\[
C_S(x)=x^{|S|}\cdot \prod_{x_i\notin S}(1+x)^{f_i}\cdot \prod_{y_j\notin S}(1+2x)^{t_j}.
\]
Now evaluate at $x=-1$. We obtain
\[
C_S(-1)=(-1)^{|S|}\cdot \prod_{x_i\notin S}(1-1)^{f_i}\cdot \prod_{y_j\notin S}(1-2)^{t_j}.
\]
Since each $f_i\ge 1$, if some $x_i\notin S$ then the factor $(1-1)^{f_i}=0$ appears. Hence $C_S(-1)=0$. Thus $C_S(-1)\neq 0$ can occur only if $x_i\in S$ for every $i$, that is, only if $X\subseteq S$. But $S$ is independent in the bipartite core. Since the core is connected, every $y_j\in Y$ has a neighbor in $X$, so no $y_j$ can belong to an independent set containing all of $X$. Hence the only independent set $S\subseteq X\sqcup Y$ with $C_S(-1)\neq 0$ is $S=X$.

It follows that
\[
P_G(-1)=C_X(-1)=(-1)^{|X|}\prod_{j=1}^m(1-2)^{t_j}
= (-1)^n\prod_{j=1}^m(-1)^{t_j}
= (-1)^{n+T}.
\]
In particular, $P_G(-1)\neq 0$, so $M(G)=0$ which is equivalent to $\mathfrak a (G)=0$.
\end{proof}

\begin{cor}\label{thm:CW_eval_minus_one}
A Cameron-Walker graph $G$  is pseudo-Gorenstein$^{*}$
if and only if $n+F+m_0 $ is even.
\end{cor}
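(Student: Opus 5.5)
By \Cref{cor:PGstar-criterion}, $G$ is pseudo-Gorenstein$^{*}$ if and only if $P_G(-1)=(-1)^{\alpha(G)}$. We already have both sides explicitly, so the proof reduces to comparing two signs.

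First, \Cref{thm:CW_a} gives $P_G(-1)=(-1)^{n+T}$. Second, \Cref{rem:CW_indep_number} gives $\alpha(G)=F+T+m_0$, so $(-1)^{\alpha(G)}=(-1)^{F+T+m_0}$. Both quantities lie in $\{\pm1\}$. Hence the criterion holds if and only if
\[
(-1)^{n+T}=(-1)^{F+T+m_0}.
\]
This is equivalent to $n+T\equiv F+T+m_0 \pmod 2$, that is, $n\equiv F+m_0 \pmod 2$.

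Finally, $n\equiv F+m_0 \pmod 2$ holds exactly when $n+F+m_0$ is even, because $n-(F+m_0)$ and $n+F+m_0$ have the same parity. The $T$ terms cancel in this comparison, which is why $T$ does not appear in the statement.

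There is no real obstacle here. The only thing to check is that the hypotheses of \Cref{thm:CW_a} and \Cref{rem:CW_indep_number} match those of this corollary: $G$ must be a connected Cameron--Walker graph that is neither a star nor a star triangle, with $f_i\ge 1$ for every $i$, as in \Cref{not:CW}. This is the standing setup of the section, so both results apply directly.
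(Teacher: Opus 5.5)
Your proposal is correct and follows essentially the same route as the paper's proof. You combine \Cref{cor:PGstar-criterion}, the value $P_G(-1)=(-1)^{n+T}$ from \Cref{thm:CW_a}, and $\alpha(G)=F+T+m_0$ from \Cref{rem:CW_indep_number}, and then observe that the $T$ terms cancel in the parity comparison.
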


\begin{proof}
By \Cref{cor:PGstar-criterion}, \(G\) is pseudo-Gorenstein\(^{*}\) if and only if \((-1)^{n+T}=(-1)^{\alpha(G)}\). Using \Cref{rem:CW_indep_number}, this is equivalent to
\[
(-1)^{n+T}=(-1)^{F+T+m_0},
\]
that is, to \(n+F+m_0\) being even.
\end{proof}

Having classified pseudo-Gorenstein$^{*}$ graphs in several natural families, we now study how the property behaves under graph operations, beginning with $C$-suspension.

\section{Suspension over vertex covers}

In this section, we study the changes to the  pseudo-Gorenstein$^{*}$ property under suspensions over vertex covers.  This graph operation and its impacts on regularity and projective dimension were studied in \cite{IndMatch} and \cite{Kara_Vien_2026}. 

We first recall the definition of suspension.

\begin{defn}
Let \(G=(V(G),E(G))\) be a finite simple graph and let \(\varnothing\neq C\subseteq V(G)\).
The \emph{\(C\)-suspension} of \(G\), denoted \(G(C)\), is the graph obtained from \(G\)
by adjoining a new vertex \(z\) adjacent exactly to the vertices of \(C\), i.e.,
\[
E(G(C))=E(G)\cup \{\{z,c\}: c\in C\}.
\]
\end{defn}

Write \(G\setminus C\) for the induced subgraph on \(V(G)\setminus C\).

\begin{remark}\label{rem:C-suspension}
Let \(z\) be the suspension vertex. Then \(G(C)-z=G\) and \(G(C)-N[z]=G\setminus C\).
Hence, by \Cref{lem:delete_contract},
\begin{equation}\label{eq:C-suspension-indpoly}
P_{G(C)}(x)=P_G(x)+x\,P_{G\setminus C}(x).
\end{equation}
Moreover,
\[
\alpha(G(C))=\max\{\alpha(G),\,1+\alpha(G\setminus C)\}.
\]
\end{remark}

As we see below, pseudo-Gorenstein$^{*}$ property behaves well under suspensions over vertex covers with the exceptions at the extremal vertex covers.

\begin{thm}\label{thm:co-suspension-preserves-PG-general}
Let \(G\) be a graph with \(\alpha(G)\ge 2\). Let $C$ be a vertex cover of $G$ such that \(
1\le|S|\le \alpha-1\) where $S= V(G) \setminus C$.
Let \(H:=G(C)\) be the \(C\)-suspension of \(G\).
Then
\[
G \text{ is pseudo-Gorenstein}^{*}\ \Longleftrightarrow\ H \text{ is pseudo-Gorenstein}^{*}.
\]
\end{thm}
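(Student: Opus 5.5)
Since $C$ is a vertex cover, $S=V(G)\setminus C$ is an independent set of $G$, so $G\setminus C$ is the edgeless graph on $|S|=s$ vertices. Its independence polynomial is therefore $(1+x)^{s}$, and the suspension identity \eqref{eq:C-suspension-indpoly} of \Cref{rem:C-suspension} becomes
\[
P_{H}(x)=P_G(x)+x(1+x)^{s}.
\]
The plan is to compare both sides of the criterion in \Cref{cor:PGstar-criterion} for $G$ and for $H$: the value of the independence polynomial at $x=-1$, and the independence number.

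\textbf{Step 1 (the value at $-1$).} Because $s\ge 1$, the extra term $x(1+x)^s$ vanishes at $x=-1$. Hence $P_H(-1)=P_G(-1)$.

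\textbf{Step 2 (the independence number).} By \Cref{rem:C-suspension},
\[
\alpha(H)=\max\{\alpha(G),\,1+\alpha(G\setminus C)\}=\max\{\alpha,\,1+s\}.
\]
The hypothesis $s\le \alpha-1$ gives $1+s\le\alpha$, so $\alpha(H)=\alpha(G)$. (Here $\alpha$ in the statement means $\alpha(G)$.)

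\textbf{Step 3 (conclusion).} By \Cref{cor:PGstar-criterion}, $H$ is pseudo-Gorenstein$^{*}$ if and only if $P_H(-1)=(-1)^{\alpha(H)}$. By Steps 1 and 2 this is the condition $P_G(-1)=(-1)^{\alpha(G)}$, which is exactly the condition for $G$ to be pseudo-Gorenstein$^{*}$.

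There is no real obstacle here; the only care needed is checking where each hypothesis is used. The bound $s\ge1$ is needed so that the correction term vanishes at $-1$. If $s=0$, that term equals $x$, its value $-1$ changes $P(-1)$, and the equivalence can fail. The bound $s\le\alpha-1$ is needed so that the independence number does not jump. If $s=\alpha$, then $\alpha(H)=\alpha+1$ and the required sign flips. The assumption $\alpha(G)\ge2$ only guarantees that the range $1\le s\le\alpha-1$ is nonempty.
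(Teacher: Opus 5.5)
Your proof is correct and follows the same route as the paper: the identity $P_H(x)=P_G(x)+x(1+x)^s$, the vanishing of the correction term at $x=-1$ because $s\ge 1$, and $\alpha(H)=\alpha(G)$ because $1+s\le\alpha$. The only difference is that you apply \Cref{cor:PGstar-criterion} directly, whereas the paper also derives $h_H(t)=h_G(t)+t(1-t)^{\alpha-s-1}$ to compare top coefficients and checks $\mathfrak a(H)=0$ separately; since the criterion already combines those two facts, those extra steps are not needed.
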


\begin{proof}
Set $\alpha:=\alpha(G)$ and $s:=|S|$. We first show that \(\alpha(H)=\alpha(G)=\alpha\).
Let $z$ be the suspension vertex.  If \(A\) is an independent set of \(H\) containing \(z\), then
\(A\setminus\{z\}\subseteq S\), since \(z\) is adjacent to every vertex of \(C\).
Hence
\[
|A|\le1+s\le \alpha.
\]
If \(A\) is an independent set of \(H\) not containing \(z\), then \(A\) is an
independent set of \(G\), so \(|A|\le \alpha\).
Thus \(\alpha(H)\le \alpha\). Since every independent set of \(G\) is also
independent in \(H\), we have \(\alpha(H)\ge \alpha\). Therefore \(\alpha(H)=\alpha\).

Next, we compute the independence polynomial of \(H\).
Since \(C=V(G)\setminus S\), the induced subgraph \(G\setminus C=G|_S\) has no
edges. So
\[
P_{G\setminus C}(x)=P_{G|_S}(x)=(1+x)^s.
\]
Hence, by \Cref{eq:C-suspension-indpoly},
\[
P_H(x)=P_G(x)+xP_{G\setminus C}(x)=P_G(x)+x(1+x)^s.
\]
Evaluating at \(x=-1\), and using \(s\ge 1\), we get
\begin{equation}\label{eq:PG-general-at-minus-one}
P_H(-1)=P_G(-1).
\end{equation}

Now we compare the \(h\)-polynomials as follows:
\begin{align*}
h_{S/I(H)}(t)
&=(1-t)^{\alpha}P_H\!\left(\frac{t}{1-t}\right)\\
&=(1-t)^{\alpha}\left[
P_G\!\left(\frac{t}{1-t}\right)+\frac{t}{1-t}
\left(1+\frac{t}{1-t}\right)^s ~
\right] \\
&=(1-t)^{\alpha}P_G\!\left(\frac{t}{1-t}\right)
+(1-t)^{\alpha}\cdot\frac{t}{1-t}\cdot
\left(\frac{1}{1-t}\right)^s\\
&=h_{R/I(G)}(t)+t(1-t)^{\alpha-s-1}.
\end{align*}
Since \(s\le \alpha-1\), the exponent \(\alpha-s-1\) is nonnegative. In addition, since
\(s\ge 1\), the polynomial \(t(1-t)^{\alpha-s-1}\) has degree \(\alpha-s\le \alpha-1\).
Hence
\begin{equation}\label{eq:top-h-general}
h_{\alpha}(H)=h_{\alpha}(G).
\end{equation}

Finally, if \(G\) is pseudo-Gorenstein\(^{*}\), then \(\mathfrak a(G)=0\), so
\(P_G(-1)\neq 0\) by \Cref{thm:deg-via-ord}. By
\Cref{eq:PG-general-at-minus-one}, this implies \(P_H(-1)\neq 0\), hence
\(\mathfrak a(H)=0\). Also, by \Cref{eq:top-h-general}, we have
\[
h_{\alpha}(H)=h_{\alpha}(G)=1.
\]
Thus \(H\) is pseudo-Gorenstein\(^{*}\). The converse is identical, using again
\Cref{eq:PG-general-at-minus-one} and \Cref{eq:top-h-general}. This completes
the proof.
\end{proof}

\begin{remark}\label{rem:susp_ver_cover_extreme_cases}
    The two remaining cases from \Cref{thm:co-suspension-preserves-PG-general} are $|S|=0$ and $|S|=\alpha(G)$.

    If $|S|=0$, then $C=V(G)$ and $H$ is the full suspension of $G$. Then 
    $$P_H(x)= P_G(x)+x.$$
    Thus, $P_H(-1)= P_G(-1)-1$. Even when $\mathfrak a(G)=0$, which is equivalent to $P_G(-1)\neq 0$, it is possible to have $P_H(-1)=0$. If that happens, we have $\mathfrak a(H)<0$.  So,  pseudo-Gorenstein\(^{*}\) property is not necessarily preserved under full suspension.  

    When $|S|=\alpha(G)$, then $\alpha(H)=\alpha(G)+1$ and 
    $$h_{S/I(H)} (t)= (1-t) h_{R/I(G)} (t) + t.$$
    In this case, if $G$ is  pseudo-Gorenstein\(^{*}\), then $h_{\alpha} (G)=1$. Notice that the leading coefficient of $h_{S/I(H)} (t)$ is $h_{\alpha(G)+1} (H)= -1$. So, $H$ cannot be pseudo-Gorenstein\(^{*}\).
\end{remark}

As noted in \Cref{rem:susp_ver_cover_extreme_cases}, full suspension does not
preserve the pseudo-Gorenstein\(^{*}\) property in general. We therefore
determine exactly when the full suspensions of cycles and paths are
pseudo-Gorenstein\(^{*}\).

\begin{thm}\label{thm:PGstar-full-susp-path-cycle}
Let \(\widehat{G}\) denote the \emph{full suspension} (cone) over a graph \(G\).
\begin{enumerate}
\item[(a)] For \(n\ge 3\), the full suspension \(\widehat{C_n}\) is pseudo-Gorenstein$^{*}$
if and only if \(n\equiv 0 \pmod{12}\).

\item[(b)] For \(n\ge 1\), the full suspension \(\widehat{P_n}\) is pseudo-Gorenstein$^{*}$
if and only if \(n\equiv 1,10 \pmod{12}\).
\end{enumerate}
\end{thm}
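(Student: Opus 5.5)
The plan is to apply \Cref{cor:PGstar-criterion} directly to the cone. By \Cref{rem:susp_ver_cover_extreme_cases} (the case $|S|=0$, i.e.\ $C=V(G)$, $G\setminus C$ empty), we have $P_{\widehat G}(x)=P_G(x)+x$, hence $P_{\widehat G}(-1)=P_G(-1)-1$. Moreover, by \Cref{rem:C-suspension}, $\alpha(\widehat G)=\max\{\alpha(G),1\}=\alpha(G)$ whenever $G$ has at least one vertex. This holds for all $n\ge 3$ for cycles and all $n\ge1$ for paths. Therefore $\widehat G$ is pseudo-Gorenstein$^{*}$ if and only if
\[
P_G(-1)-1=(-1)^{\alpha(G)}.
\]
Since $P_G(-1)$ is periodic modulo $6$ by \Cref{lem:path-minus-one} and \Cref{lem:cycle-minus-one}, and $(-1)^{\alpha(G)}$ is periodic modulo $4$, the condition is periodic modulo $12$. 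The proof thus reduces to a finite table check over $n=0,\dots,11$, exactly as in \Cref{thm:cycle} and \Cref{thm:path}.

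For (a), I take $\alpha=\lfloor n/2\rfloor$ and use the values of $c_n$ from the table in \Cref{thm:cycle}. The condition reads $c_n=1+(-1)^{\lfloor n/2\rfloor}$. The right side is $2$ when $\lfloor n/2\rfloor$ is even, i.e.\ $n\equiv 0,1,4,5,8,9\pmod{12}$, and $0$ otherwise. Since $c_n$ never vanishes, we need $c_n=2$, which happens only for $n\equiv 0\pmod 6$. Intersecting gives $n\equiv 0\pmod{12}$. As a sanity check, $n\equiv 6$ has $\lfloor n/2\rfloor$ odd, so it is excluded.

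For (b), I take $\alpha=\lceil n/2\rceil$ and the values $p_n$ from \Cref{lem:path-minus-one}. The condition is $p_n=1+(-1)^{\lceil n/2\rceil}$. Since $p_n\in\{-1,0,1\}$, the value $2$ is impossible. So we need $p_n=0$ together with $\lceil n/2\rceil$ odd. Here $p_n=0$ iff $n\equiv 1,4\pmod 6$, i.e.\ $n\equiv 1,4,7,10\pmod{12}$. Among these, $\lceil n/2\rceil$ is $1,2,4,5$ modulo the relevant period, which is odd exactly for $n\equiv 1,10\pmod{12}$. This matches the claim.

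There is no real obstacle; the only care needed is confirming that $\alpha$ does not change under coning, and that $c_n$ and $p_n$ never take values making the target $\pm$ coincide in unexpected ways. The case analysis above handles both, since the target value lies in $\{0,2\}$.
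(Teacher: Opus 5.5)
Your proposal is correct and follows essentially the same route as the paper: you use $P_{\widehat G}(x)=P_G(x)+x$ and $\alpha(\widehat G)=\alpha(G)$ to reduce to $P_G(-1)-1=(-1)^{\alpha(G)}$, then do a residue check with \Cref{lem:path-minus-one} and \Cref{lem:cycle-minus-one}. Rewriting the condition as $P_G(-1)=1+(-1)^{\alpha(G)}\in\{0,2\}$ is only a slightly tidier packaging of the paper's observation that $P_G(-1)-1$ must equal $\pm1$.
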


\begin{proof}
Let \(z\) be the new vertex in the full suspension \(\widehat{G}\).
Since \(z\) is adjacent to every vertex of \(G\), we have \(P_{\widehat{G}}(x)=P_G(x)+x\). Hence
\begin{equation}\label{eq:cone-eval-minus-one}
P_{\widehat{G}}(-1)=P_G(-1)-1.
\end{equation}
Moreover, \(\alpha(\widehat{G})=\alpha(G)\), because any independent set of
\(\widehat{G}\) containing \(z\) has size \(1\), while every nonempty graph \(G\)
has an independent set of size at least \(1\). Next, we now apply \Cref{cor:PGstar-criterion}.

(a) Let \(G=C_n\), where \(n\ge 3\). Then
 \(\alpha(\widehat{C_n})=\alpha(C_n)=\left\lfloor \frac n2\right\rfloor\). By \Cref{cor:PGstar-criterion}, \(\widehat{C_n}\) is pseudo-Gorenstein\(^{*}\) if and only if
\(P_{\widehat{C_n}}(-1)=(-1)^{\lfloor n/2\rfloor}
\). Using \eqref{eq:cone-eval-minus-one}, this is equivalent to
\[
P_{C_n}(-1)-1=(-1)^{\lfloor n/2\rfloor}.
\]
By \Cref{lem:cycle-minus-one}, we have the following
\[
P_{C_n}(-1)-1=
\begin{cases}
1,& n\equiv 0 \pmod 6,\\[4pt]
0,& n\equiv 1,5 \pmod 6,\\[4pt]
-2,& n\equiv 2,4 \pmod 6,\\[4pt]
-3,& n\equiv 3 \pmod 6.
\end{cases}
\]
Since the right-hand side \( (-1)^{\lfloor n/2\rfloor}\) is always \(\pm 1\), the
only possible case is \(n\equiv 0\pmod 6\), where we must have
\[
1=(-1)^{\lfloor n/2\rfloor}.
\]
If \(n\equiv 0\pmod 6\), then \(\lfloor n/2\rfloor=n/2\), so this occurs exactly
when \(n/2\) is even, i.e. when \(n\equiv 0\pmod{12}\). Hence
\(\widehat{C_n}\) is pseudo-Gorenstein\(^{*}\) if and only if \(n\equiv 0\pmod{12}\).

\smallskip
\noindent
\textbf{(b)} Let \(G=P_n\), where \(n\ge 1\). Then
 \(\alpha(\widehat{P_n})=\alpha(P_n)=\left\lceil \frac n2\right\rceil\).
Again by \Cref{cor:PGstar-criterion}, \(\widehat{P_n}\) is pseudo-Gorenstein\(^{*}\)
if and only if \(P_{\widehat{P_n}}(-1)=(-1)^{\lceil n/2\rceil}\). Using \eqref{eq:cone-eval-minus-one}, this is equivalent to
\[
P_{P_n}(-1)-1=(-1)^{\lceil n/2\rceil}.
\]
By \Cref{lem:path-minus-one}, we have
\[
P_{P_n}(-1)-1=
\begin{cases}
0,& n\equiv 0,5 \pmod 6,\\[4pt]
-1,& n\equiv 1,4 \pmod 6,\\[4pt]
-2,& n\equiv 2,3 \pmod 6.
\end{cases}
\]
Thus equality with \( (-1)^{\lceil n/2\rceil}\in\{\pm1\}\) is possible only when
\(n\equiv 1,4\pmod 6\), in which case we need
\[
-1=(-1)^{\lceil n/2\rceil}.
\]
If \(n\equiv 1\pmod 6\), then this happens exactly for \(n\equiv 1\pmod{12}\).
If \(n\equiv 4\pmod 6\), then this happens exactly for \(n\equiv 10\pmod{12}\).
Therefore \(\widehat{P_n}\) is pseudo-Gorenstein\(^{*}\) if and only if \(n\equiv 1,10 \pmod{12}\).
\end{proof}

\section{Suspensions over maximal independent sets: cycles}

In this section, we study suspensions over maximal independent sets of cycles.
In general, suspensions over maximal independent sets do not preserve the
\(\mathfrak a\)-invariant (see \cite[Theorem 5.7]{Kara_Vien_2026}).

Let \(n\ge 4\), let \(\mathcal C\) be a maximal independent set of \(C_n\), and
let \(G\) be the \(\mathcal C\)-suspension of \(C_n\) with new vertex \(z\), i.e.
\[
V(G)=V(C_n)\cup\{z\},
\qquad
E(G)=E(C_n)\cup\{\{z,x_i\}:x_i\in \mathcal C\}.
\]
Recall from \Cref{cor:path-cycle-signed-values} that, with \(\alpha:=\alpha(C_n)=\left\lfloor \frac n2\right\rfloor\)
\[
a_n:=(-1)^{\alpha}P_{C_n}(-1)=h_{\alpha}(C_n),
\]
the value of \(a_n\) is determined by \(n \bmod 12\).

\begin{remark}\label{rem:cycle-max-ind}
Write \(c:=|\mathcal C|\). Then \(\left\lceil \frac n3\right\rceil \le c\le \left\lfloor \frac n2\right\rfloor\). Set \(\ell:=n-2c\). Since \(\mathcal C\) is maximal independent, the gaps (vertices) between
consecutive vertices of \(\mathcal C\) around the cycle have length \(1\) or \(2\).
Hence, if \(H:=G-N[z]\), then \(H\) is the disjoint union of \(\ell\) edges and \(c-\ell\) isolated vertices.
Thus
\[
\alpha(H)=\ell+(c-\ell)=c. 
\]
Therefore \(\alpha(G)=\max\{\alpha(C_n),\,1+\alpha(H)\}=\max\{\alpha,\,c+1\}\). Since \(c\le \alpha\), it follows that
\[
\alpha(G)=
\begin{cases}
\alpha+1,& \text{if } c=\alpha,\\[4pt]
\alpha,& \text{if } c<\alpha.
\end{cases}
\]
\end{remark}

\begin{remark}
The case \(n=3\) is exceptional. Here \(|\mathcal C|=1\), and the unique
\(\mathcal C\)-suspension \(G\) of \(C_3\) satisfies
\[
P_G(x)=1+4x+2x^2,
\qquad \text{ and } \qquad
h_G(t)=1+2t-t^2.
\]
In particular, \(G\) is not pseudo-Gorenstein\(^{*}\).
\end{remark}

\begin{thm}\label{thm:suspension_cycle}
We have
\[
h_{\alpha(G)}(G)=
\begin{cases}
-a_n,& \text{if } c=\alpha,\\[6pt]
\operatorname{sgn}(a_n),& \text{if } c=\left\lceil \dfrac n3\right\rceil<\alpha,\\[6pt]
a_n,& \text{if } \left\lceil \dfrac n3\right\rceil<c<\alpha.
\end{cases}
\]
\end{thm}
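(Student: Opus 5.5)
The plan is to compute $h_{\alpha(G)}(G)$ directly from \Cref{prop:h-from-independence}, which holds for any graph regardless of the degree of the $h$-polynomial. It gives $h_{\alpha(G)}(G)=(-1)^{\alpha(G)}P_G(-1)$, so everything reduces to evaluating $P_G(-1)$. For this I would use \Cref{eq:C-suspension-indpoly} with $C=\mathcal C$, together with the description of $H=G-N[z]$ in \Cref{rem:cycle-max-ind} as $\ell$ disjoint edges plus $c-\ell$ isolated vertices. Since $P_{K_2}(x)=1+2x$, this gives
\[
P_G(x)=P_{C_n}(x)+x(1+2x)^{\ell}(1+x)^{c-\ell}.
\]
Evaluating at $x=-1$, the second term equals $-(-1)^{\ell}\cdot 0^{\,c-\ell}$. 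It is therefore $0$ unless $c=\ell$, that is, unless $3c=n$, in which case it equals $(-1)^{c+1}$ (because $\ell=c$). Hence
\[
P_G(-1)=P_{C_n}(-1)-[\,n=3c\,](-1)^{c}.
\]

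Next I would split into the three cases of the statement, using the value of $\alpha(G)$ from \Cref{rem:cycle-max-ind}.

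\emph{Case $c=\alpha$.} Here $\alpha(G)=\alpha+1$. One checks that $n=3c=3\lfloor n/2\rfloor$ is impossible for $n\ge 4$, since $3\lfloor n/2\rfloor\ge 3(n-1)/2>n$. So the correction term vanishes, and $h_{\alpha+1}(G)=(-1)^{\alpha+1}P_{C_n}(-1)=-a_n$.

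\emph{Case $\lceil n/3\rceil<c<\alpha$.} Here $\alpha(G)=\alpha$, and $n=3c$ would force $c=n/3=\lceil n/3\rceil$, which is excluded. The correction vanishes again, and $h_\alpha(G)=a_n$.

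\emph{Case $c=\lceil n/3\rceil<\alpha$.} This is the main point and needs a further split.
\begin{itemize}
\item If $3\nmid n$, then $n\ne 3c$ and $h_\alpha(G)=a_n$. By \Cref{cor:path-cycle-signed-values}, $a_n=\pm2$ exactly when $n\equiv 0,3,6,9\pmod{12}$, i.e. when $3\mid n$. So here $|a_n|=1$ and $a_n=\operatorname{sgn}(a_n)$.
\item If $3\mid n$, then $c=n/3$ and $h_\alpha(G)=a_n-(-1)^{\lfloor n/2\rfloor+n/3}$.
\end{itemize}

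The remaining obstacle is a finite parity check modulo $12$ over the four residues $n\equiv 0,3,6,9$. The goal is to show that $(-1)^{\lfloor n/2\rfloor+n/3}=\operatorname{sgn}(a_n)$, so that $a_n-\operatorname{sgn}(a_n)=\operatorname{sgn}(a_n)$ because $|a_n|=2$.
\begin{itemize}
\item $n=12k$: the exponent is $6k+4k$, which is even, matching $a_n=2$.
\item $n=12k+3$: the exponent is $(6k+1)+(4k+1)$, which is even, matching $a_n=2$.
\item $n=12k+6$: the exponent is $(6k+3)+(4k+2)$, which is odd, matching $a_n=-2$.
\item $n=12k+9$: the exponent is $(6k+4)+(4k+3)$, which is odd, matching $a_n=-2$.
\end{itemize}
This gives $\operatorname{sgn}(a_n)$ in every subcase and completes the proposed argument.
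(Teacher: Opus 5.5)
Your proposal is correct and follows essentially the paper's argument: the same recursion $P_G(x)=P_{C_n}(x)+x(1+x)^{c-\ell}(1+2x)^{\ell}$, the observation that the correction term vanishes at $x=-1$ unless $n=3c$, and a case analysis using $\alpha(G)$ from the remark on cycle suspensions. The differences are cosmetic. The paper splits first on $c>\ell$ versus $c=\ell$ and handles $n=3c$ through $P_{C_{3c}}(-1)=2(-1)^c$, so that $a_n=2(-1)^{\alpha+c}$ and the sign match is automatic. You instead split by the three stated cases and do an explicit mod-$12$ parity check, and you also verify explicitly that $c=\alpha$ rules out $n=3c$, which the paper leaves implicit.
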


\begin{proof}
By \Cref{lem:delete_contract}, since \(G-z=C_n\) and \(G-N[z]=H\), we have
\[
P_G(x)=P_{C_n}(x)+x\,P_H(x)
      =P_{C_n}(x)+x(1+x)^{c-\ell}(1+2x)^{\ell}.
\]
Hence, by \Cref{prop:h-from-independence},
\[
h_{\alpha(G)}(G)=(-1)^{\alpha(G)}P_G(-1).
\]

If \(c>\ell\), then \(3c>n\). So \(c-\ell>0\), and the second term vanishes at
\(x=-1\). Thus
\[
P_G(-1)=P_{C_n}(-1),
\qquad
h_{\alpha(G)}(G)=(-1)^{\alpha(G)-\alpha}a_n.
\]
If \(c=\alpha\), then \(\alpha(G)=\alpha+1\). So \(h_{\alpha(G)}(G)=-a_n\).
If \(\left\lceil \frac n3\right\rceil<c<\alpha\), then \(\alpha(G)=\alpha\). So
\(h_{\alpha(G)}(G)=a_n\).
Finally, if \(c=\left\lceil \frac n3\right\rceil<\alpha\), then \(3\nmid n\). So
\(a_n\in\{\pm1\}\) by \Cref{cor:path-cycle-signed-values}. Hence in this subcase
\(h_{\alpha(G)}(G)=a_n=\operatorname{sgn}(a_n)\).

It remains to consider \(c=\ell\). In this case \(n=3c\). So \(c=\left\lceil \frac n3\right\rceil<\alpha\)
and \(\alpha(G)=\alpha\). Evaluating at \(x=-1\), we get
\[
P_G(-1)=P_{C_n}(-1)+(-1)^{c+1} \qquad \text{ and } \qquad
h_{\alpha(G)}(G)=a_n+(-1)^{\alpha+c+1}.
\]
Since \(n=3c\), the values of \(P_{C_n}(-1)\) from \Cref{lem:cycle-minus-one} give \(P_{C_n}(-1)=2(-1)^c\). Therefore
\[
a_n=(-1)^{\alpha}P_{C_n}(-1)=2(-1)^{\alpha+c}.
\]
Thus
\[
h_{\alpha(G)}(G)
=
2(-1)^{\alpha+c}+(-1)^{\alpha+c+1}
=
(-1)^{\alpha+c}
=
\operatorname{sgn}(a_n).
\]
This proves the theorem.
\end{proof}

\begin{cor}
Let \(n\ge 4\), let \(\mathcal C\) be a maximal independent set of \(C_n\), and
let \(G\) be the \(\mathcal C\)-suspension of \(C_n\). Then \(G\) is
pseudo-Gorenstein\(^{*}\) if and only if one of the following holds:
\begin{enumerate}
\item[(a)] \(n\equiv 0,3 \pmod{12}\) and \(|\mathcal C|=\left\lceil \frac n3\right\rceil\);
\item[(b)] \(n\equiv 4,7,8,11 \pmod{12}\) and \(|\mathcal C|=\alpha(C_n)\);
\item[(c)] \(n\equiv 1,2,5,10 \pmod{12}\) and \(|\mathcal C|\ne \alpha(C_n)\).
\end{enumerate}
\end{cor}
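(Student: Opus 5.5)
The plan is to read the corollary off \Cref{thm:suspension_cycle} together with the table of values of $a_n$ in \Cref{cor:path-cycle-signed-values}. By \Cref{cor:PGstar-criterion} and \Cref{prop:h-from-independence}, $G$ is pseudo-Gorenstein$^{*}$ if and only if $P_G(-1)=(-1)^{\alpha(G)}$, i.e.\ if and only if $h_{\alpha(G)}(G)=(-1)^{\alpha(G)}P_G(-1)=1$. This single condition already forces $P_G(-1)\neq 0$, so the $\mathfrak a$-invariant takes care of itself. The task is therefore to decide, in each of the three regimes of \Cref{thm:suspension_cycle}, when the displayed value equals $1$.

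I will go through the regimes in the order they are stated. Here $c=|\mathcal C|$ and $\alpha=\alpha(C_n)$; the first line of \Cref{thm:suspension_cycle} applies whenever $c=\alpha$, even if also $c=\lceil n/3\rceil$.
\begin{enumerate}
\item If $c=\alpha$, then $h_{\alpha(G)}(G)=-a_n$. This equals $1$ exactly when $a_n=-1$, i.e.\ when $n\equiv 4,7,8,11\pmod{12}$.
\item If $c=\lceil n/3\rceil<\alpha$, then $h_{\alpha(G)}(G)=\operatorname{sgn}(a_n)$. This equals $1$ exactly when $a_n\in\{1,2\}$, i.e.\ when $n\equiv 0,1,2,3,5,10\pmod{12}$.
\item If $\lceil n/3\rceil<c<\alpha$, then $h_{\alpha(G)}(G)=a_n$. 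This equals $1$ exactly when $n\equiv 1,2,5,10\pmod{12}$.
\end{enumerate}

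Next I will sort these conditions by the residue of $n$ modulo $12$.
\begin{itemize}
\item For $n\equiv 1,2,5,10$: regimes (2) and (3) both succeed and (1) fails. Regimes (2) and (3) together are exactly the case $c\neq\alpha$, which gives (c).
\item For $n\equiv 4,7,8,11$: only regime (1) succeeds, which gives (b).
\item For $n\equiv 0,3$: here $a_n=2$, so only regime (2) succeeds. This gives (a), once we know that $c=\lceil n/3\rceil$ automatically satisfies $c<\alpha$.
\item For $n\equiv 6,9$: here $a_n=-2$, so no regime gives $1$, and $G$ is never pseudo-Gorenstein$^{*}$. This matches the statement, which lists no condition for these residues.
\end{itemize}

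The only delicate point is the overlap between the boundary cases $c=\lceil n/3\rceil$ and $c=\alpha$, which happens only for small $n$, such as $n=4,5$. I will handle it as follows.
\begin{itemize}
\item For (a), I check that when $n\equiv 0,3\pmod{12}$ and $n\ge 4$, we have $n\ge 12$, so $\lceil n/3\rceil<\lfloor n/2\rfloor$. Thus condition (a) indeed falls in regime (2).
\item For (c), the hypothesis $c\neq\alpha$ excludes regime (1), as required. An example is $n=5$, where $c=2=\alpha$ and $h_{\alpha(G)}(G)=-1$.
\end{itemize}
With these checks, the three listed conditions are exactly the cases in which $h_{\alpha(G)}(G)=1$.
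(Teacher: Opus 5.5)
Your proposal is correct and is essentially the paper's proof. It reads the three regimes of \Cref{thm:suspension_cycle} against the values of $a_n$ from \Cref{cor:path-cycle-signed-values} and then sorts by $n \bmod 12$. The only difference is that you get ``pseudo-Gorenstein$^{*}$ $\iff h_{\alpha(G)}(G)=1$'' directly from \Cref{cor:PGstar-criterion}, since $h_{\alpha(G)}(G)=1$ already forces $\deg h_G=\alpha(G)$, whereas the paper cites \cite[Theorem 4.9]{Kara_Vien_2026} for $\mathfrak a(G)=0$; your route is valid and slightly more self-contained.
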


\begin{proof}
It was shown in \cite[Theorem 4.9]{Kara_Vien_2026} that \(\mathfrak a(G)=0\) for any $\mathcal{C}$-suspension of $C_n$ for any $n\geq 3$. Hence \(G\) is pseudo-Gorenstein\(^{*}\) if and
only if \(h_{\alpha(G)}(G)=1\).

Now apply \Cref{thm:suspension_cycle} together with the values of \(a_n\) from
\Cref{cor:path-cycle-signed-values}. If \(c=\alpha\), then
\(h_{\alpha(G)}(G)=-a_n\), which equals \(1\) exactly when
\(n\equiv 4,7,8,11\pmod{12}\). If \(c=\left\lceil \frac n3\right\rceil<\alpha\),
then \(h_{\alpha(G)}(G)=\operatorname{sgn}(a_n)\), which equals \(1\) exactly when
\(n\equiv 0,1,2,3,5,10\pmod{12}\). If
\(\left\lceil \frac n3\right\rceil<c<\alpha\), then \(h_{\alpha(G)}(G)=a_n\), which
equals \(1\) exactly when \(n\equiv 1,2,5,10\pmod{12}\). Combining these cases gives
(a), (b), and (c).
\end{proof}

\section{Suspensions over maximal independent sets: paths}

In this section, we study suspensions over maximal independent sets of paths.

Let \(n\ge 2\), let \(P_n\) be the path on vertices \(\{x_1,\ldots,x_n\}\), let
\(\mathcal C\) be a maximal independent set of \(P_n\), and let \(G\) be the
\(\mathcal C\)-suspension of \(P_n\) with new vertex \(z\).

Set \(\alpha:=\alpha(P_n)=\left\lceil \frac n2\right\rceil\). Recall from  \Cref{cor:path-cycle-signed-values}  that
\[
b_n:=(-1)^{\alpha}P_{P_n}(-1)=h_{\alpha}(P_n),
\]
the values of \(b_n\) depend only on
\(n\bmod 12\).

\begin{notation}\label{not:path_susp_max_ind}
Write \(\mathcal C=\{x_{i_1},x_{i_2},\dots,x_{i_c}\}\) with
\(i_1<i_2<\cdots<i_c\), where \(c:=|\mathcal C|\). For consecutive vertices
\(x_{i_j},x_{i_{j+1}}\in \mathcal C\), call the collection of vertices \(\{x_{i_j+1},\dots,x_{i_{j+1}-1}\}\) the \emph{internal gap} between them. Since \(\mathcal C\) is maximal
independent, each such gap has size \(1\) or \(2\), equivalently
\[
2\le i_{j+1}-i_j\le 3 \qquad \text{for }1\le j\le c-1.
\]
Moreover, the left endpoint \(x_1\) is either in \(\mathcal C\) or is the unique
vertex before \(x_{i_1}\). An analogous statement holds for the right endpoint \(x_n\). Define
\[
\delta_0=
\begin{cases}
0,& \text{if }x_1\in \mathcal C,\\
1,& \text{if }x_1\notin \mathcal C,
\end{cases}
\qquad
\delta_t=
\begin{cases}
0,& \text{if }x_n\in \mathcal C,\\
1,& \text{if }x_n\notin \mathcal C,
\end{cases}
\]
and set \(\delta:=\delta_0+\delta_t\in\{0,1,2\}\).
\end{notation}

\begin{remark}\label{rem:delta}
Let \(H:=G-N[z]\). Then \(H\) is obtained from \(P_n\) by deleting the vertices of
\(\mathcal C\). Every internal gap of size \(2\) contributes an edge to \(H\),
while every internal gap of size \(1\) contributes an isolated vertex. In
addition, each missing endpoint contributes one more isolated vertex. If \(\ell\)
denotes the number of internal gaps of size \(2\), then \(H\) is the disjoint
union of \(\ell\) edges and
\[
e:=(c-1-\ell)+\delta=c-\ell-1+\delta
\]
isolated vertices. Consequently, \(\alpha(H)=\ell+e=c-1+\delta\) and
\[
P_H(x)=(1+x)^e(1+2x)^\ell.
\]
Hence \(\alpha(G)=\max\{\alpha(P_n),\,1+\alpha(H)\}
=\max\{\alpha,\,c+\delta\}\).
Moreover,
\[
\alpha(G)=
\begin{cases}
\alpha+1,& \text{if }c+\delta=\alpha+1,\\[4pt]
\alpha,& \text{if }c+\delta\le \alpha.
\end{cases}
\]
Indeed,   \(n=2c+\ell-1+\delta\ge 2c-1+\delta\). Hence \(2c+\delta\le n+1\).
Since \(\delta\le 2\), we get
\[
2(c+\delta)=2c+2\delta\le n+1+\delta\le n+3.
\]
Therefore
\[
c+\delta\le \left\lfloor \frac{n+3}{2}\right\rfloor
=\left\lceil \frac n2\right\rceil+1=\alpha+1.
\]
\end{remark}

\begin{thm}\label{thm:suspension_path}
Let \(G\) be a \(\mathcal C\)-suspension of \(P_n\) with notation as in
\Cref{not:path_susp_max_ind}.
\begin{enumerate}[label=(\alph*)]
\item If \(n\equiv 0,2 \pmod 3\), then \(\mathfrak a(G)=0\) and
\[
h_{\alpha(G)}(G)=
\begin{cases}
-b_n,& \text{if }c+\delta=\alpha+1,\\[4pt]
b_n,& \text{if }c+\delta\le \alpha.
\end{cases}
\]

\item If \(n=3k+1\) and \(\mathcal C=\{x_1,x_4,\dots,x_{3k+1}\}\), then
\(\mathfrak a(G)=0\) and
\[
h_{\alpha(G)}(G)=(-1)^{\alpha+k+1}.
\]

\item In all remaining cases, \(\mathfrak a(G)<0\).
\end{enumerate}
\end{thm}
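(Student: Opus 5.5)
The plan is to compute $P_G(-1)$ from the vertex-deletion recursion and then read off both the $\mathfrak a$-invariant and the top $h$-coefficient. Since $G-z=P_n$ and $G-N[z]=H$, \Cref{lem:delete_contract} together with \Cref{rem:delta} gives
\[
P_G(x)=P_{P_n}(x)+x(1+x)^e(1+2x)^{\ell},\qquad e=c-\ell-1+\delta .
\]
Evaluating at $x=-1$ splits everything into two regimes.
\begin{enumerate}
\item[(i)] If $e\ge 1$, the second term vanishes, so $P_G(-1)=p_n$.
\item[(ii)] If $e=0$, then $P_G(-1)=p_n+(-1)^{\ell+1}$.
\end{enumerate}
After that, \Cref{cor:a-via-M} gives $\mathfrak a(G)=0$ exactly when $P_G(-1)\neq 0$. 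When this holds, \Cref{prop:h-from-independence} gives $h_{\alpha(G)}(G)=(-1)^{\alpha(G)}P_G(-1)$.

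The first key step is to characterize when $e=0$. Each internal gap has size at most $2$, so $\ell\le c-1$. Hence $e=0$ forces $\delta=0$ and $\ell=c-1$. This means both endpoints lie in $\mathcal C$ and every internal gap has size $2$. Then $n=2c+\ell-1+\delta=3c-2$, so $n=3k+1$ with $c=k+1$ and $\mathcal C=\{x_1,x_4,\dots,x_{3k+1}\}$. This is exactly the configuration of case (b); every other maximal independent set has $e\ge1$.

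Next I treat the case $e\ge 1$, where $P_G(-1)=p_n$. By \Cref{lem:path-minus-one}, $p_n=0$ iff $n\equiv 1,4\pmod 6$, i.e.\ iff $n\equiv 1\pmod 3$.
\begin{itemize}
\item[$\circ$] If $n\equiv 1\pmod 3$ and $\mathcal C$ is not the special set, then $P_G(-1)=0$. So $M(G)\ge1$ and $\mathfrak a(G)<0$, which is case (c).
\item[$\circ$] If $n\equiv 0,2\pmod 3$, then $e\ge1$ automatically, since $e=0$ forces $n\equiv1\pmod 3$. So $P_G(-1)=p_n\ne0$ and $\mathfrak a(G)=0$. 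Using $b_n=(-1)^\alpha p_n$, we get $h_{\alpha(G)}(G)=(-1)^{\alpha(G)-\alpha}b_n$. The two-case formula for $\alpha(G)$ in \Cref{rem:delta} then yields $-b_n$ when $c+\delta=\alpha+1$ and $b_n$ when $c+\delta\le\alpha$, which is case (a).
\end{itemize}

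For case (b), $n=3k+1$ with $k\ge1$, $\ell=k$, $c=k+1$ and $\delta=0$. Since $p_{3k+1}=0$, we get $P_G(-1)=(-1)^{k+1}\ne0$, so $\mathfrak a(G)=0$. It remains to show $\alpha(G)=\alpha$, i.e.\ $c+\delta=k+1\le\lceil(3k+1)/2\rceil$. This holds for $k\ge1$, with equality only at $k=1$, where it still gives $\alpha(G)=\alpha$. Therefore $h_{\alpha(G)}(G)=(-1)^{\alpha+k+1}$.

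The main obstacle is the first key step: the counting argument showing that $e=0$ pins down the special independent set. Everything else is direct evaluation using the earlier lemmas.
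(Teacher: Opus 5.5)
Your proof is correct and follows the paper's argument. It uses the same recursion $P_G(x)=P_{P_n}(x)+x(1+x)^e(1+2x)^{\ell}$, the same split into $e\ge 1$ versus $e=0$ (with $e=0$ forcing $\mathcal C=\{x_1,x_4,\dots,x_{3k+1}\}$), and the same appeal to \Cref{rem:delta} to decide between $\alpha(G)=\alpha$ and $\alpha(G)=\alpha+1$. The one difference is part (c), which you derive directly: when $n\equiv 1\pmod 3$ and $\mathcal C$ is not the special set, $e\ge 1$ gives $P_G(-1)=p_n=0$, so $\mathfrak a(G)<0$ by \Cref{cor:a-via-M}. The paper instead cites \cite[Theorems 5.6 and 5.7]{Kara_Vien_2026} for when $\mathfrak a(G)=0$, so your version is slightly more self-contained.
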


\begin{proof}
It is proved in \cite[Theorems 5.6 and 5.7]{Kara_Vien_2026} that \(\mathfrak a(G)=0\)  exactly when \(n\equiv 0,2\pmod 3\) or when  \(n=3k+1\) and \(\mathcal C=\{x_1,x_4,\dots,x_{3k+1}\}\)
for some \(k\ge 1\).  This proves part (c). Thus, it remains to consider for which $n$ the following holds: 
\[
1= h_{\alpha(G)}(G)=(-1)^{\alpha(G)}P_G(-1).
\]
It follows from \Cref{lem:delete_contract} that
\[
P_G(x)=P_{P_n}(x)+x\,P_H(x)
      =P_{P_n}(x)+x(1+x)^e(1+2x)^\ell.
\]
First suppose \(e\ge 1\). Then  \(P_G(-1)=P_{P_n}(-1)\). By \Cref{cor:path-cycle-signed-values}, this is nonzero exactly when
\(n\equiv 0,2\pmod 3\). By \Cref{prop:h-from-independence} 
\[
h_{\alpha(G)}(G)=(-1)^{\alpha(G)}P_G(-1)
= (-1)^{\alpha(G)-\alpha}b_n.
\]
Now apply \Cref{rem:delta}: if \(c+\delta=\alpha+1\), then
\(\alpha(G)=\alpha+1\), so \(h_{\alpha(G)}(G)=-b_n\); if \(c+\delta\le \alpha\),
then \(\alpha(G)=\alpha\), so \(h_{\alpha(G)}(G)=b_n\). This proves (a).

Now suppose \(e=0\). Then \(H\) has no isolated vertices. This means \(\delta=0\) and
every internal gap has size \(2\) which is equivalent to \(\mathcal C=\{x_1,x_4,\dots,x_{3k+1}\}\)
for some \(k\ge 1\). Therefore \(n=3k+1\). In this case \(H\) is the
disjoint union of \(k\) edges with \(\ell=k\). Since \(n\equiv 1\pmod 3\), we have
\(P_{P_n}(-1)=0\) and 
\[
P_G(-1)=(-1)(1-2)^k=(-1)^{k+1}\neq 0.
\]
Since  \(c=k+1\le \alpha\), we have
\(\alpha(G)=\alpha\) by \Cref{rem:delta}. Hence, (b) holds:
\[
h_{\alpha(G)}(G)=(-1)^{\alpha(G)}P_G(-1)=(-1)^{\alpha+k+1}. \qedhere
\]
\end{proof}

\begin{cor}\label{cor:suspension_path}
Suppose \(n\ge 2\). Then \(G\) is pseudo-Gorenstein\(^{*}\) if and only if one of
the following holds:
\begin{enumerate}[label=(\alph*)]
\item \(n\equiv 0,2,9,11 \pmod{12}\) and \(c+\delta\le \alpha\);

\item \(n\equiv 3,5,6,8 \pmod{12}\) and \(c+\delta=\alpha+1\);

\item \(n\equiv 1,4 \pmod{12}\) and \(\mathcal C=\{x_1,x_4,\dots,x_n\}\).
\end{enumerate}
\end{cor}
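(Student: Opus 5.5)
The proof will combine \Cref{thm:suspension_path} with the table of values of \(b_n\) in \Cref{cor:path-cycle-signed-values}. Pseudo-Gorenstein\(^{*}\) means \(\mathfrak a(G)=0\) and \(h_{\alpha(G)}(G)=1\). By \Cref{thm:suspension_path}(c), \(\mathfrak a(G)<0\) outside the situations of parts (a) and (b), so those cases are immediately excluded. It therefore suffices to decide, within parts (a) and (b) of \Cref{thm:suspension_path}, exactly when \(h_{\alpha(G)}(G)=1\).

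\textbf{Case \(n\equiv 0,2 \pmod 3\).} Modulo \(12\) these are the residues \(0,2,3,5,6,8,9,11\). The values \(1,4,7,10\), where \(b_n=0\), correspond to \(n\equiv 1\pmod 3\), so in this case \(b_n=\pm1\).

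If \(c+\delta\le\alpha\), then \(h_{\alpha(G)}(G)=b_n\), and this equals \(1\) exactly when \(n\equiv 0,2,9,11\pmod{12}\). This gives condition (a).

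If \(c+\delta=\alpha+1\), then \(h_{\alpha(G)}(G)=-b_n\), and this equals \(1\) exactly when \(n\equiv 3,5,6,8\pmod{12}\). This gives condition (b).

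By \Cref{rem:delta}, one of these two alternatives always holds, so the case split is exhaustive.

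\textbf{Case \(n=3k+1\) with \(\mathcal C=\{x_1,x_4,\dots,x_n\}\).} Here \(h_{\alpha(G)}(G)=(-1)^{\alpha+k+1}\) with \(\alpha=\lceil (3k+1)/2\rceil\). I will determine when \(\alpha+k\) is odd by reducing \(k\) modulo \(4\):
\begin{itemize}
\item \(k=4j\): \(n=12j+1\), \(\alpha=6j+1\), so \(\alpha+k=10j+1\) is odd, giving \(h=1\).
\item \(k=4j+1\): \(n=12j+4\), \(\alpha=6j+2\), so \(\alpha+k=10j+3\) is odd, giving \(h=1\).
\item \(k=4j+2\): \(n=12j+7\), \(\alpha=6j+4\), so \(\alpha+k\) is even, giving \(h=-1\).
\item \(k=4j+3\): \(n=12j+10\), \(\alpha=6j+5\), so \(\alpha+k\) is even, giving \(h=-1\).
\end{itemize}
So in this case \(h_{\alpha(G)}(G)=1\) exactly when \(n\equiv 1,4\pmod{12}\), which is condition (c).

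The only step needing care is this parity computation in part (b) of \Cref{thm:suspension_path}, since \(\alpha\) involves a ceiling; the mod-\(4\) split in \(k\) handles it. Finally, I should check the degenerate requirement \(k\ge1\) in part (b). This is automatic because \(n\ge 2\) and \(n\equiv 1\pmod 3\) force \(n\ge 4\).
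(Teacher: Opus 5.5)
Your proposal is correct and is essentially the paper's own argument. You exclude the cases with $\mathfrak a(G)<0$ via \Cref{thm:suspension_path}(c), read off $h_{\alpha(G)}(G)=\pm b_n$ from \Cref{cor:path-cycle-signed-values} according to whether $c+\delta\le\alpha$ or $c+\delta=\alpha+1$, and settle the case $\mathcal C=\{x_1,x_4,\dots,x_n\}$ by a parity check, all as the paper does; the only difference is that you carry out that parity check explicitly by splitting $k$ modulo $4$ and verify $k\ge 1$, whereas the paper simply asserts the outcome $k\equiv 0,1\pmod 4$.
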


\begin{proof}
By \Cref{cor:path-cycle-signed-values},
\[
b_n=
\begin{cases}
1,& \text{if } n\equiv 0,2,9,11 \pmod{12},\\[4pt]
-1,& \text{if } n\equiv 3,5,6,8 \pmod{12},\\[4pt]
0,& \text{if } n\equiv 1,4,7,10 \pmod{12}.
\end{cases}
\]

If \(n\equiv 0,2\pmod 3\), then \Cref{thm:suspension_path}(a) shows that
\(G\) is pseudo-Gorenstein\(^{*}\) exactly when \(h_{\alpha(G)}(G)=1\). If
\(c+\delta\le \alpha\), this means \(b_n=1\), giving
\(n\equiv 0,2,9,11\pmod{12}\). If \(c+\delta=\alpha+1\), this means
\(-b_n=1\), giving \(n\equiv 3,5,6,8\pmod{12}\).

If \(n=3k+1\) and \(\mathcal C=\{x_1,x_4,\dots,x_{3k+1}\}\), then by
\Cref{thm:suspension_path}(b), \(G\) is pseudo-Gorenstein\(^{*}\) exactly when
\[
(-1)^{\alpha+k+1}=1.
\]
A parity check shows that this holds exactly when \(k\equiv 0,1\pmod 4\),
equivalently when \(n=3k+1\equiv 1,4\pmod{12}\).

All remaining cases are excluded by \Cref{thm:suspension_path}(c).
\end{proof}

\bibliographystyle{abbrv}
\bibliography{ref}

@misc{Kara_Vien_2026,
      title={Algebraic Invariants of Edge Ideals Under Suspension}, 
      author={Selvi Kara and Dalena Vien},
      year={2026},
      eprint={2603.05657},
      archivePrefix={arXiv},
      primaryClass={math.AC},
      url={https://arxiv.org/abs/2603.05657}, 
}

@article{GutmanHarary,
    AUTHOR = {Gutman, Ivan and Harary, Frank},
     TITLE = {Generalizations of the matching polynomial},
   JOURNAL = {Utilitas Math.},
    VOLUME = {24},
      YEAR = {1983},
     PAGES = {97--106},
}

@article {Unicyclic,
    AUTHOR = {Pedersen, Anders Sune and Vestergaard, Preben Dahl},
     TITLE = {The number of independent sets in unicyclic graphs},
   JOURNAL = {Discrete Appl. Math.},
  FJOURNAL = {Discrete Applied Mathematics. The Journal of Combinatorial
              Algorithms, Informatics and Computational Sciences},
    VOLUME = {152},
      YEAR = {2005},
    NUMBER = {1-3},
     PAGES = {246--256},
      ISSN = {0166-218X,1872-6771},
   MRCLASS = {05C69 (05C05 11B39)},
  MRNUMBER = {2174205},
MRREVIEWER = {Reinhardt\ Euler},
       DOI = {10.1016/j.dam.2005.04.002},
       URL = {https://doi.org/10.1016/j.dam.2005.04.002},
}

@article {Application_music_ind_poly,
    AUTHOR = {Brown, Jason and Hoshino, Richard},
     TITLE = {Independence polynomials of circulants with an application to
              music},
   JOURNAL = {Discrete Math.},
  FJOURNAL = {Discrete Mathematics},
    VOLUME = {309},
      YEAR = {2009},
    NUMBER = {8},
     PAGES = {2292--2304},
      ISSN = {0012-365X,1872-681X},
   MRCLASS = {05C69 (05A15 05C30 05C31)},
  MRNUMBER = {2510357},
MRREVIEWER = {Eugen\ Mandrescu},
       DOI = {10.1016/j.disc.2008.05.003},
       URL = {https://doi.org/10.1016/j.disc.2008.05.003},
}

@article {Unimodal_ind_poly,
    AUTHOR = {Brown, J. I. and Cameron, B.},
     TITLE = {On the unimodality of independence polynomials of very
              well-covered graphs},
   JOURNAL = {Discrete Math.},
  FJOURNAL = {Discrete Mathematics},
    VOLUME = {341},
      YEAR = {2018},
    NUMBER = {4},
     PAGES = {1138--1143},
      ISSN = {0012-365X,1872-681X},
   MRCLASS = {05C31 (05C69)},
  MRNUMBER = {3764366},
MRREVIEWER = {Saeid\ Alikhani},
       DOI = {10.1016/j.disc.2017.10.007},
       URL = {https://doi.org/10.1016/j.disc.2017.10.007},
}

@article {Claw_free_ind_poly,
    AUTHOR = {Chudnovsky, Maria and Seymour, Paul},
     TITLE = {The roots of the independence polynomial of a clawfree graph},
   JOURNAL = {J. Combin. Theory Ser. B},
  FJOURNAL = {Journal of Combinatorial Theory. Series B},
    VOLUME = {97},
      YEAR = {2007},
    NUMBER = {3},
     PAGES = {350--357},
      ISSN = {0095-8956,1096-0902},
   MRCLASS = {05C69},
  MRNUMBER = {2305888},
MRREVIEWER = {Steven\ D.\ Noble},
       DOI = {10.1016/j.jctb.2006.06.001},
       URL = {https://doi.org/10.1016/j.jctb.2006.06.001},
}

@article {Ind_complex,
    AUTHOR = {Bousquet-M\'elou, Mireille and Linusson, Svante and Nevo,
              Eran},
     TITLE = {On the independence complex of square grids},
   JOURNAL = {J. Algebraic Combin.},
  FJOURNAL = {Journal of Algebraic Combinatorics. An International Journal},
    VOLUME = {27},
      YEAR = {2008},
    NUMBER = {4},
     PAGES = {423--450},
      ISSN = {0925-9899,1572-9192},
   MRCLASS = {05C69 (05B45 05C50)},
  MRNUMBER = {2393250},
       DOI = {10.1007/s10801-007-0096-x},
       URL = {https://doi.org/10.1007/s10801-007-0096-x},
}

@article {Fibonaci_main,
    AUTHOR = {Prodinger, Helmut and Tichy, Robert F.},
     TITLE = {Fibonacci numbers of graphs},
   JOURNAL = {Fibonacci Quart.},
  FJOURNAL = {The Fibonacci Quarterly. Official Organ of the Fibonacci
              Association},
    VOLUME = {20},
      YEAR = {1982},
    NUMBER = {1},
     PAGES = {16--21},
      ISSN = {0015-0517},
   MRCLASS = {05C99 (05C05)},
  MRNUMBER = {660753},
MRREVIEWER = {\c Serban\ Buze\c teanu},
}

@article {Fibonacci,
    AUTHOR = {Knopfmacher, Arnold and Tichy, Robert F. and Wagner, Stephan
              and Ziegler, Volker},
     TITLE = {Graphs, partitions and {F}ibonacci numbers},
   JOURNAL = {Discrete Appl. Math.},
  FJOURNAL = {Discrete Applied Mathematics. The Journal of Combinatorial
              Algorithms, Informatics and Computational Sciences},
    VOLUME = {155},
      YEAR = {2007},
    NUMBER = {10},
     PAGES = {1175--1187},
      ISSN = {0166-218X,1872-6771},
   MRCLASS = {05C70 (05C69)},
  MRNUMBER = {2332311},
       DOI = {10.1016/j.dam.2006.10.010},
       URL = {https://doi.org/10.1016/j.dam.2006.10.010},
}

@article {Gen_choromatic_poly,
    AUTHOR = {Dohmen, Klaus and P\"onitz, Andr\'e{} and Tittmann, Peter},
     TITLE = {A new two-variable generalization of the chromatic polynomial},
   JOURNAL = {Discrete Math. Theor. Comput. Sci.},
  FJOURNAL = {Discrete Mathematics \& Theoretical Computer Science. DMTCS.},
    VOLUME = {6},
      YEAR = {2003},
    NUMBER = {1},
     PAGES = {69--89},
      ISSN = {1365-8050},
   MRCLASS = {05C15 (68Q25 68R10)},
  MRNUMBER = {1996108},
MRREVIEWER = {N.\ Z.\ Li},
}

@article {GW,
    AUTHOR = {Goto, Shiro and Watanabe, Keiichi},
     TITLE = {On graded rings. {I}},
   JOURNAL = {J. Math. Soc. Japan},
  FJOURNAL = {Journal of the Mathematical Society of Japan},
    VOLUME = {30},
      YEAR = {1978},
    NUMBER = {2},
     PAGES = {179--213},
      ISSN = {0025-5645,1881-1167},
   MRCLASS = {13H10 (13D03 14B15)},
  MRNUMBER = {494707},
MRREVIEWER = {Gerald\ S.\ Garfinkel},
       DOI = {10.2969/jmsj/03020179},
       URL = {https://doi.org/10.2969/jmsj/03020179},
}

@article {Sta78,
    AUTHOR = {Stanley, Richard P.},
     TITLE = {Hilbert functions of graded algebras},
   JOURNAL = {Advances in Math.},
  FJOURNAL = {Advances in Mathematics},
    VOLUME = {28},
      YEAR = {1978},
    NUMBER = {1},
     PAGES = {57--83},
      ISSN = {0001-8708},
   MRCLASS = {13D10 (13H10)},
  MRNUMBER = {485835},
MRREVIEWER = {Idun\ Reiten},
       DOI = {10.1016/0001-8708(78)90045-2},
       URL = {https://doi.org/10.1016/0001-8708(78)90045-2},
}

@book {Sta72,
    AUTHOR = {Stanley, Richard P.},
     TITLE = {Ordered structures and partitions},
    SERIES = {Memoirs of the American Mathematical Society},
    VOLUME = {No. 119},
 PUBLISHER = {American Mathematical Society, Providence, RI},
      YEAR = {1972},
     PAGES = {iii+104},
   MRCLASS = {05A17},
  MRNUMBER = {332509},
MRREVIEWER = {L.\ K.\ Durst},
}

@article {hibi2021CW,
    AUTHOR = {Hibi, Takayuki and Kimura, Kyouko and Matsuda, Kazunori and
              Tsuchiya, Akiyoshi},
     TITLE = {Regularity and {$a$}-invariant of {C}ameron-{W}alker graphs},
   JOURNAL = {J. Algebra},
  FJOURNAL = {Journal of Algebra},
    VOLUME = {584},
      YEAR = {2021},
     PAGES = {215--242},
      ISSN = {0021-8693,1090-266X},
   MRCLASS = {13A70 (05E40 13H10)},
  MRNUMBER = {4270555},
MRREVIEWER = {Aming\ Liu},
       DOI = {10.1016/j.jalgebra.2021.05.007},
       URL = {https://doi.org/10.1016/j.jalgebra.2021.05.007},
}

@misc{biermann2026,
      title={Realizable (reg, deg h)-Pairs for Cover Ideals via Independence Polynomials}, 
      author={Jennifer Biermann and Trung Chau and Selvi Kara and Augustine O'Keefe and Joseph Skelton and Gabriel Sosa Castillo and Dalena Vien},
      year={arXiv Preprint 2602.10376, 2026},
      eprint={2602.10376},
      archivePrefix={arXiv},
      primaryClass={math.AC},
      url={https://arxiv.org/abs/2602.10376}, 
}

@book {EC1,
    AUTHOR = {Stanley, Richard P.},
     TITLE = {Enumerative combinatorics. {V}olume 1},
    SERIES = {Cambridge Studies in Advanced Mathematics},
    VOLUME = {49},
   EDITION = {Second},
 PUBLISHER = {Cambridge University Press, Cambridge},
      YEAR = {2012},
     PAGES = {xiv+626},
      ISBN = {978-1-107-60262-5},
   MRCLASS = {05-02 (05A15 06-02)},
  MRNUMBER = {2868112},
}

@incollection {Hibi,
    AUTHOR = {Hibi, Takayuki},
     TITLE = {Distributive lattices, affine semigroup rings and algebras
              with straightening laws},
 BOOKTITLE = {Commutative algebra and combinatorics ({K}yoto, 1985)},
    SERIES = {Adv. Stud. Pure Math.},
    VOLUME = {11},
     PAGES = {93--109},
 PUBLISHER = {North-Holland, Amsterdam},
      YEAR = {1987},
      ISBN = {0-444-70314-4},
   MRCLASS = {13H10 (06D99)},
  MRNUMBER = {951198},
MRREVIEWER = {Keiichi\ Watanabe},
       DOI = {10.2969/aspm/01110093},
       URL = {https://doi.org/10.2969/aspm/01110093},
}

@article {EHHM,
    AUTHOR = {Ene, Viviana and Herzog, J\"urgen and Hibi, Takayuki and
              Saeedi Madani, Sara},
     TITLE = {Pseudo-{G}orenstein and level {H}ibi rings},
   JOURNAL = {J. Algebra},
  FJOURNAL = {Journal of Algebra},
    VOLUME = {431},
      YEAR = {2015},
     PAGES = {138--161},
      ISSN = {0021-8693,1090-266X},
   MRCLASS = {13H10 (05E40 06A11 06D99)},
  MRNUMBER = {3327545},
MRREVIEWER = {Daniel\ D.\ Anderson},
       DOI = {10.1016/j.jalgebra.2015.02.002},
       URL = {https://doi.org/10.1016/j.jalgebra.2015.02.002},
}

@article{IndMatch,
    AUTHOR = {Hibi, Takayuki and Kanno, Hiroju and Matsuda, Kazunori},
     TITLE = {Induced matching numbers of finite graphs and edge ideals},
   JOURNAL = {J. Algebra},
  FJOURNAL = {Journal of Algebra},
    VOLUME = {532},
      YEAR = {2019},
     PAGES = {311--322},
      ISSN = {0021-8693,1090-266X},
   MRCLASS = {05C70 (05C69 05E40 13D40 13H10)},
  MRNUMBER = {3959482},
MRREVIEWER = {Mehrdad\ Nasernejad},
       DOI = {10.1016/j.jalgebra.2019.04.036},
       URL = {https://doi.org/10.1016/j.jalgebra.2019.04.036},
}

@article {CW+Graphs,
    AUTHOR = {Cameron, Kathie and Walker, Tracy},
     TITLE = {The graphs with maximum induced matching and maximum matching
              the same size},
   JOURNAL = {Discrete Math.},
  FJOURNAL = {Discrete Mathematics},
    VOLUME = {299},
      YEAR = {2005},
    NUMBER = {1-3},
     PAGES = {49--55},
      ISSN = {0012-365X,1872-681X},
   MRCLASS = {05C70 (05C17 05C75)},
  MRNUMBER = {2168694},
MRREVIEWER = {Winfried\ Hochst\"attler},
       DOI = {10.1016/j.disc.2004.07.022},
       URL = {https://doi.org/10.1016/j.disc.2004.07.022},
}

@article{levit2009independence_preprint,
  title={The independence polynomial of a graph at-1},
  author={Levit, Vadim E and Mandrescu, Eugen},
  journal={arXiv preprint arXiv:0904.4819},
  year={2009}
}

@article {Levit_Mandrescu,
    AUTHOR = {Levit, Vadim E. and Mandrescu, Eugen},
     TITLE = {The cyclomatic number of a graph and its independence
              polynomial at {$-1$}},
   JOURNAL = {Graphs Combin.},
  FJOURNAL = {Graphs and Combinatorics},
    VOLUME = {29},
      YEAR = {2013},
    NUMBER = {2},
     PAGES = {259--273},
      ISSN = {0911-0119,1435-5914},
   MRCLASS = {05C31 (05C38 05C69)},
  MRNUMBER = {3027601},
MRREVIEWER = {Mohammad\ Reza\ Oboudi},
       DOI = {10.1007/s00373-011-1101-7},
       URL = {https://doi.org/10.1007/s00373-011-1101-7},
}

\end{document}